\newif\ifmor
\definecolor{plgreen}{rgb}{0,0.5,0}
\newcommand{\PL}[1]{{\color{black}#1}}
\newcommand{\LL}[1]{{\color{black}#1}}
\newcommand{\Ky}[1]{{\color{black}#1}}
\definecolor{darkgrey}{gray}{0.4}
\newtheorem{theorem}{Theorem}[section]
\newtheorem{lemma}[theorem]{Lemma}
\newtheorem{proposition}[theorem]{Proposition}
\newtheorem{corollary}[theorem]{Corollary}
\newenvironment{proof}
 {{\sl Proof.}\hspace*{1 ex}}%
 {{\nopagebreak\hspace*{\fill}$\Box$\par\vspace{12pt}}}
\newcommand{\qed}{\hfill \ensuremath{\Box}}
\DeclareMathOperator{\argmin}{\mbox{arg\,min}}
\newcommand{\transpose}[1]{{#1}^{\top}}
\newcommand{\cone}[1]{\mbox{\sf cone}(#1)}
\newcommand{\conv}[1]{\mbox{\sf conv}(#1)}
\renewcommand{\ker}[1]{\mbox{\sf ker}(#1)}
\renewcommand{\dim}[1]{\mbox{\sf dim}(#1)}
\newcommand{\MyAbstract}{Random projections are random linear maps, sampled from appropriate distributions, that approximately preserve certain geometrical invariants so that the approximation improves as the dimension of the space grows. The well-known Johnson-Lindenstrauss lemma states that there are \LL{random matrices with surprisingly few rows} that approximately preserve pairwise Euclidean distances among a set of points. This is commonly used to speed up algorithms based on Euclidean distances. We prove that these matrices also preserve other quantities, such as the distance to a cone. We exploit this result to devise a probabilistic algorithm to solve linear programs approximately. \LL{We show that this algorithm can approximately solve very large randomly generated LP instances. We also showcase its application to an error correction coding problem.}}
\begin{document}

\ifmor


\RUNAUTHOR{Vu, Poirion and Liberti}

\RUNTITLE{Random projections for linear programming}

\TITLE{Random projections for linear programming}

\ARTICLEAUTHORS{%
\AUTHOR{Ky Vu}
\AFF{ITCSC, Chinese University of Hong Kong, \EMAIL{\href{mailto:vukhacky@gmail.com}{vukhacky@gmail.com}}, \URL{\href{http://www.lix.polytechnique.fr/~vu/}{www.lix.polytechnique.fr/$\sim$vu}}}
\AUTHOR{Pierre-Louis Poirion}
\AFF{Huawei Research Center, Paris, France, \EMAIL{\href{mailto:kiwisensei@gmail.com}{kiwisensei@gmail.com}}, \URL{\href{https://sites.google.com/site/plpoirion86/}{sites.google.com/site/plpoirion86/}}}
\AUTHOR{Leo Liberti}
\AFF{CNRS LIX, Ecole Polytechnique, 91128 Palaiseau, France, \EMAIL{\href{mailto:liberti@lix.polytechnique.fr}{liberti@lix.polytechnique.fr}}, \URL{\href{http://www.lix.polytechnique.fr/~liberti/}{www.lix.polytechnique.fr/$\sim$liberti}}}
} 


\else

\thispagestyle{empty}
\begin{center} 
{\LARGE Random projections for linear programming}
\par \bigskip
{\sc Vu Khac Ky, Pierre-Louis Poirion, Leo Liberti} 
\par \bigskip
{\it LIX, \'Ecole Polytechnique, F-91128 Palaiseau,
France} \\ Email:\url{{vu,poirion,liberti}@lix.polytechnique.fr}
\par \medskip \today
\end{center}
\par \bigskip
\date{\today}

\fi

\ifmor
\ABSTRACT{\MyAbstract}
\else
\begin{abstract}
\MyAbstract
\end{abstract}
\fi

\ifmor
\KEYWORDS{Johnson-Lindenstrauss lemma, concentration of measure, dimension reduction}
\MSCCLASS{Primary: 90C05; secondary: 60D05, 52A23}
\ORMSCLASS{Primary: programming: linear theory; secondary: mathematics: systems solutions} 
\HISTORY{Submitted \today}

\maketitle
\fi

\section{Introduction}
\label{s:intro}
A deep and surprising result, called the {\it Johnson-Lindenstrauss Lemma} (JLL) \cite{jllemma}, states that a set of high dimensional points can be  projected to a much lower dimensional space while keeping Euclidean distances approximately the same. The JLL was previously exploited in purely Euclidean distance based algorithms, such as $k$-means \cite{boutsidis2010} and $k$ nearest neighbours \cite{indyk}. The JLL has rarely been employed in mathematical optimization. The few occurrences are related to reasonably natural cases such as linear regression \cite{pilanci2014}, where the error minimization is encoded by means of a Euclidean norm. One reason for this is that the very proof of the JLL exploits rotational invariance, naturally exhibited by sets of distances, but which feasible sets commonly occurring in Linear Programming (LP), such as orthants, obviously do not. In this paper we lay the theoretical foundations of solving LPs approximately using random projections, and showcase their usefulness in practice. More precisely, we address LPs in standard form
\begin{equation}
P\equiv\min\{\transpose{c}x\;|\;Ax=b\land x\in\mathbb{R}^n_+\}, \label{orig}
\end{equation}
where $A$ is an $m\times n$ matrix. For each $i\le m$ we let $A^i$ be the $i$-th row of $A$, and for each $j\le n$ we let $A_j$ be the $j$-th column of $A$. If $I$ is a set of row indices, we indicate the submatrix of $A$ consisting of those rows by $A^I$; if $J$ is a set of column indices, we indicate the submatrix of $A$ consisting of those columns by $A_J$. We let $\cone{A}$ be the cone spanned by the column vectors $A_j$ (for $j\le n$), and $\conv{A}$ be the convex hull of the column vectors $A_j$ (for $j\le n$). We denote by $v(P)$ the optimal objective function value of \Ky{the problem} $P$, and by $\mathcal{F}(P)$ its feasible region. Note that determining whether $\mathcal{F}(P)\not=\varnothing$ is exactly the same problem as determining whether $b\in\cone{A}$. Throughout this paper, all norms will be Euclidean unless specified otherwise.

In this paper we often assume that $b$, $c$, and all the column vectors of
$A$ have unit Euclidean norm. This assumption does not lose
generality: if $\tilde{x}$ is the optimal solution of the
reformulation of Eq.~\eqref{orig} where all these vectors have unit
norm, we can compute the optimal solution $x^\ast$ of Eq.~\eqref{orig}
as follows:\Ky{
\[\forall j \in \{1,\ldots,n\}, \quad x^\ast_j=\frac{  \|b\|\tilde{x}_j}{\|A_j\|}.\]
}
A {\it random projector} is a $k\times m$ matrix $T$, sampled from
appropriate distributions (more details on this below), which
preserves certain geometrical properties of sets of points in
$\mathbb{R}^m$. We denote by
\begin{equation}
P_T\equiv\min\{\transpose{c}x\;|\;TAx=Tb\land
x\in\mathbb{R}^n_+\} \label{proj}
\end{equation}
the random projection version of \Ky{the problem} $P$. Our main result
(Thm.~\ref{thm:objfunapprox}) is that we can construct a random
projector $T$, with $k\ll n$, such that, for some given
$\varepsilon>0$, we have $|v(P)-v(P_T)|\le\varepsilon$ with
overwhelming probability (w.o.p.). Moreover, for fixed $\varepsilon$,
$k$ turns out to be $O(\ln n)$. Since the complexity of solving LPs
depends on both $m$ and $n$, a logarithmic reduction on $m$ (even as a
function of $n$) appears very appealing. By ``w.o.p.'' we mean that
the probability of the concerned event is $1-f(k)$ where $f(k)$ tends
to zero extremely fast as $k$ tends to infinity. Typically, $f$ is $O(e^{-k})$.

So far so good; unfortunately, there are some bad news too. First, we
prove that the optimum of \Ky{the projected problem} $P_T$ is
infeasible w.r.t.~$\mathcal{F}(P)$ (the original region) with
probability 1 (Prop.~\ref{prop:certificate1}), which appears to
severely limit the usefulness of Thm.~\ref{thm:objfunapprox} --- we
address this limitation in Sect.~\ref{s:retrieval}. Second, sampling
$T$ and performing matrix multiplications $T(A,b)$ is time consuming,
since $T$ is a dense matrix. Third, even though the original LP is
sparse, the projected LP is dense as a result of $T$ being dense,
which means that solving it has an added computational cost. Last, but
not least, we have no idea about how to estimate, much less compute,
the constant in the term $O(\ln n)$. We know that the term
$\frac{1}{\varepsilon^2}$, which is large if we want the approximation
to be tight, plays a role; but there are other universal constants
that also play a role. We also know that the probability of the event
$|v(P)-v(P_T)|\le\varepsilon$ approaches 1 as $1-O(e^{-k})$. All this
suggests that any practical usefulness of this methodology will come
from very large instances and/or very dense instances.


\LL{
  \subsection{Differences with existing literature}
  \label{s:litrev}
Randomized dimension reduction techniques are widely used in the analysis of large data sets, but much less so in Mathematical Programming (MP). Specifically, in the field of LP we are aware of the three main results \cite{candestao2005,pilanci2014,pucci}. We set compressed sensing \cite{candestao2005} aside, as strictly speaking this is not a solution or reformulation method, but rather a theoretical analysis which explains why $\ell_1$-norm minimization of the error of an underdetermined linear system is an excellent proxy for reconstructing sparse solutions. Although we are only citing the paper \cite{candestao2005} for compressed sensing, this line of work gave rise to a very large number of papers by many different authors. We shall see in Sect.~\ref{s:coding} that compressed sensing can be ``further compressed'' using our methodology.

In \cite{pilanci2014}, it is shown how matrix sketching (which is strongly related to random projections) can help decrease the dimensionality of some convex quadratic minimization over an arbitrary convex set $\mathcal{C}$ from a given $\mathbb{R}^m$ to $\mathbb{R}^k$ for some $k\le m$. Prop.~\ref{prop:cvxhull} below emphasizes some of the differences with the present work; \cite[Eq.~(28) \S 3.4]{pilanci2014}, for example, encodes the problem of deciding whether zero is in the convex hull of the columns of a given matrix $B$. Unlike our development, the analysis provided in \cite{pilanci2014} requires the projected dimension $k$ to be bounded below by a function of several parameters before any probability estimation can be made. Another remarkable difference is that the framework described in \cite{pilanci2014} {\it requires} a convex purely quadratic objective function: to encode a linear objective $\transpose{c}x$ using a quadratic, the most direct way involves the introduction of a new scalar variable $y$, and then rewriting $\min \transpose{c}x$ as $\min y^2$ subject to $y\ge \transpose{c}x$ and $y\ge 0$. This reformulation, however, prevents the application of the method. Lastly, in \cite{pilanci2014} we find that the projected dimension $k$ is of the order of magnitude of the Gaussian width $W$ of $\mathcal{C}$. To require $k\ll m$, this implies working with convex sets $\mathcal{C}$ having small Gaussian width. By contrast, our technique optimizes over orthants, which have (large) Gaussian width $O(n)$.

The paper \cite{pucci} proposes a randomized dimensionality reduction based on PAC learning \cite{anthony}: from a small training set, it is possible to forecast some properties of large data sets while keeping the error low. This is exploited in LPs with very few variables and huge numbers of inequality constraints: it is found that this number can be greatly reduced while keeping the optimality error bounded. In order to have PAC learning assumptions work, the authors focus on application cases which have a specific structure, i.e.~there is an order on the constraints which makes their slope vary in a controlled way (an example is given by the piecewise linear approximation of a two-dimensional closed convex curve: one can take many tangents, but few of these suffice to give almost the same approximation). The prominent difference with the method proposed in this paper is that we make no such assumption.
}

\subsection{Contents}
The rest of the paper is organized as follows. Section \ref{s:jll} reports the basic concepts about the JLL. In Sect.~\ref{s:lpfeas} we show that random projections approximately preserve LP feasibility with high probability. The proof of our main theorem is offered in Sect.~\ref{s:opt}, where we argue that random projections also preserve LP optimality with high probability. In Sect.~\ref{s:retrieval} we address the limitation referred to above, and provide a method to work out the solution of the original LP given the solution of the projected LP. \LL{In Sect.~\ref{s:complexity} we make some remarks about computational complexity. Sect.~\ref{s:compres} reports some computational results, and Sect.~\ref{s:coding} showcases an application to error correcting codes.}


\section{The Johnson-Lindenstrauss lemma}
\label{s:jll}
The JLL is stated as follows:
\begin{theorem}[Johnson-Lindenstrauss Lemma \cite{jllemma}] \label{J-L lemma}
Given $\varepsilon \in (0,1)$ and an $m\times n$ matrix $A$, there exists a $k\times m$ matrix $T$ such that:
\begin{equation}
  \forall 1\le i<j\le n \quad (1-\varepsilon)\|A_i -A_j\| \le \|TA_i - TA_j\| \le
  (1+\varepsilon)\|A_i -A_j\|, \label{eq:jll}
\end{equation}
where $k$ is $O(\varepsilon^{-2}\ln n)$.
\end{theorem}
Thus, all sets of $n$ points can be projected to a subspace having dimension logarithmic in $n$ (and, surprisingly, independent of the original number $m$ of dimensions), such that no distance is distorted by more than $1+2\varepsilon$. The JLL can be established as a consequence of a general property (see Lemma \ref{randomprojectionlemma} below) of \emph{sub-gaussian} random mappings $T = \frac{1}{\sqrt{k}} U$ \cite{matousek-jll}. Some of the most popular choices for $U$ are:\\
\Ky{\textbf{Choices of random projection}:}
\begin{enumerate}
\item orthogonal projections on a random $k$-dimensional linear subspace of $\mathbb{R}^m$ \cite{jllemma};
\item random $k\times m$ matrices with each entry independently drawn from the standard normal distribution $\mathcal{N}(0,1)$ \cite{indyk};
\item random $k\times m$ matrices with each entry independently taking values $+1$ and $-1$, each with probability $\frac{1}{2}$ \cite{achlioptas};
\item random $k\times m$ matrices with entries independently taking values $+1$, $0$, $-1$, respectively with probability $\frac{1}{6}$, $\frac{2}{3}$, $\frac{1}{6}$ \cite{achlioptas} \LL{(we call this the {\it Achlioptas random projector}).}\label{enum:ach}
\end{enumerate}
Other, sparser projectors have been proposed in \cite{achlioptas,dasgupta2,kane,micali}. In this paper we just limit our attention to the normally distributed $T\sim\mathcal{N}(0,1/\sqrt{k})$ and its discrete approximation in Item \ref{enum:ach} above. Our reasons for ignoring this issue is that we believe that the rue bottleneck lies the unknown ``large constants'' referred to above. The matrix product operation (on which the choice of random projector would have the greatest impact) is one of the most common in scientific computing, and many ways are known to optimize and streamline it. \LL{In our computational experiments (Sect.~\ref{s:compres}-\ref{s:coding}) we use the Achlioptas projector and the most obvious matrix product implementation.}

Note that all the random projectors we consider have zero mean. This is necessary in order to ensure that our randomized algorithms will yield the result we want in expectation. This also explains why we consider LPs in standard rather than canonical form: we can not apply the random projection to the inequality system $Ax\le b$ to yield $TAx\le Tb$: \Ky{this is almost always false}, since the signs of the components of the matrix $T$ are distributed uniformly. 

The JLL can be derived from a more fundamental result \cite{matousekmetric}.
\begin{lemma}[Random projection lemma] \label{randomprojectionlemma}
For all $\varepsilon\in(0,1)$ and all vectors $y\in\mathbb{R}^m$, \Ky{let $T$ be a $k\times m$ random projector from one of the choices (1-4) above , then} 
\begin{equation} \label{eqn:01}
  \mbox{\sf Prob}(\,(1-\varepsilon)\|y\|\le\|Ty\|\le(1+\varepsilon)\|y\|\,)
  \ge 1-2e^{-\mathcal{C}\varepsilon^2 k}
\end{equation}
for some constant $\mathcal{C}>0$ (independent of $m,k,\varepsilon$).
\end{lemma}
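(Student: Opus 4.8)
The plan is to reduce the statement to a one-dimensional concentration estimate for a sum of independent sub-exponential random variables. By homogeneity of both sides of Eq.~\eqref{eqn:01} in $\|y\|$, I may assume $\|y\|=1$. Write $T=\frac{1}{\sqrt{k}}U$, where for choices (2)--(4) the entries $U_{ij}$ are independent, mean-zero, unit-variance, and sub-gaussian (the $\pm1$ and $\{-1,0,+1\}$ variables are bounded, hence sub-gaussian; the Gaussian case is classical). Setting $X_i=(Uy)_i=\sum_{j=1}^m U_{ij}y_j$, the variables $X_1,\dots,X_k$ are independent, each has mean $0$ and variance $\sum_j y_j^2=\|y\|^2=1$, and each is sub-gaussian, since a linear combination of independent sub-gaussians with a unit-norm coefficient vector $y$ is again sub-gaussian with an absolute sub-gaussian constant. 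Consequently
\[
\|Ty\|^2=\frac1k\sum_{i=1}^k X_i^2,
\]
so $\|Ty\|^2$ is an average of $k$ independent copies of a variable $X_i^2$ with mean $1$.

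The core step is to show that this average concentrates around its mean, namely
\[
\mbox{\sf Prob}\big(\,|\,\|Ty\|^2-1\,|>\varepsilon\,\big)\le 2e^{-\mathcal{C}\varepsilon^2 k}
\]
for $\varepsilon\in(0,1)$. Because each $X_i$ is sub-gaussian, each $X_i^2$ is sub-exponential, and a Bernstein-type (Chernoff) argument applies to the centred sum $\sum_i(X_i^2-1)$: I would bound the moment generating function $\mathbb{E}\,e^{t(X_i^2-1)}$ on a small interval $|t|<t_0$ and optimize over $t$, obtaining a tail of the form $e^{-ck\min(\varepsilon^2,\varepsilon)}$. For $\varepsilon\in(0,1)$ the minimum equals $\varepsilon^2$, which yields the claimed bound with a universal constant $\mathcal{C}$ depending only on the sub-gaussian norm of the entries, hence only on which of the four distributions is used. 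In the Gaussian case this reduces to the explicit $\chi^2_k$ tail estimate.

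Finally I would pass from the squared norm to the norm. On the event $|\,\|Ty\|^2-1\,|\le\varepsilon$, the inequalities $(1+\varepsilon)^2\ge 1+\varepsilon$ and $(1-\varepsilon)^2\le 1-\varepsilon$ (both valid for $\varepsilon\in(0,1)$) give $(1-\varepsilon)^2\le\|Ty\|^2\le(1+\varepsilon)^2$, hence $(1-\varepsilon)\le\|Ty\|\le(1+\varepsilon)$, which after rescaling by $\|y\|$ is exactly the event in Eq.~\eqref{eqn:01}. Taking complements produces the stated probability bound.

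I expect the main obstacle to be the two-sided control of the moment generating function of $X_i^2-1$ in the general sub-gaussian setting: the upper tail follows readily from the finiteness of $\mathbb{E}\,e^{tX_i^2}$ for small $t>0$, but the lower tail requires a separate estimate of $\mathbb{E}\,e^{-tX_i^2}$ together with care that the constants remain uniform across the four distributions. The orthogonal-projection choice (1) needs an additional remark, since there the rows of $U$ are not independent; one handles it either via rotational invariance of the uniform measure on the Grassmannian or by comparison with the Gaussian model through Gram--Schmidt, both of which produce the same concentration rate. This is the content of the sub-gaussian treatment of \cite{matousek-jll,matousekmetric}, which I would cite for the uniform constants.
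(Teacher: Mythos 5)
The paper offers no proof of this lemma at all: it is imported verbatim as a known result, with a citation to Matou\v{s}ek, so there is nothing internal to compare your argument against. What you wrote is a correct reconstruction of the standard argument behind that citation: reduce to $\|y\|=1$, note that the coordinates $X_i=(Uy)_i$ are independent, mean-zero, unit-variance and uniformly sub-gaussian, apply a Bernstein/Chernoff bound to the sub-exponential variables $X_i^2-1$ to get the tail $e^{-ck\min(\varepsilon,\varepsilon^2)}=e^{-ck\varepsilon^2}$ for $\varepsilon\in(0,1)$, and pass from $\|Ty\|^2$ to $\|Ty\|$ via $(1-\varepsilon)^2\le 1-\varepsilon$ and $(1+\varepsilon)^2\ge 1+\varepsilon$. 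All of these steps are sound, and your closing remarks correctly identify the two genuine technical points (two-sided MGF control, and the non-independence of rows in choice (1)).

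Two small caveats, both traceable to the paper's loose statement rather than to your logic. First, for choice (4) as literally written (entries in $\{+1,0,-1\}$ with probabilities $\tfrac16,\tfrac23,\tfrac16$) the entry variance is $\tfrac13$, not $1$, so your blanket claim that choices (2)--(4) have unit-variance entries fails there; Achlioptas' matrices carry an extra factor $\sqrt{3}$, without which the lemma itself is false (then $\|Ty\|$ concentrates near $\|y\|/\sqrt{3}$). Your proof is correct once that normalization is restored, but a referee would want you to say so explicitly. Second, choice (1) has an analogous normalization issue (an orthogonal projection onto a random $k$-dimensional subspace satisfies $\mathbb{E}\|Py\|^2=\tfrac{k}{m}\|y\|^2$, so the right scaling is $\sqrt{m/k}$, not $1/\sqrt{k}$); your proposed treatment by rotational invariance or Gaussian comparison is the standard one and works, but again only after the scaling is fixed.
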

\Ky {It can be proved easily that JLL is a consequence of Lemma \ref{randomprojectionlemma} by setting $y = A_i - A_j$ for all pairs of $(i,j)$ and then applying the union bound. Moreover, }Lemma \ref{randomprojectionlemma} shows that the probability of finding a good $T$ is very high for large enough values of $k$. Indeed, from Lemma \ref{randomprojectionlemma}, the probability that Eq.~\eqref{eq:jll} holds for all $i\not=j\le n$ is at least
\begin{equation}
  1 - 2 {n\choose 2}e^{-\mathcal{C}\varepsilon^2 k} =
    1 - n(n-1)e^{-\mathcal{C}\varepsilon^2 k}.\label{eq:probjll}
\end{equation}
Therefore, if we want this probability to be larger than, say $99.9\%$, we simply choose any $k$ such that $\frac{1}{1000n(n-1)}>e^{-\mathcal{C}\varepsilon^2 k}$.  This means $k$ can be chosen to be $k=\lceil\frac{\ln(1000)+2\ln(n)}{\mathcal{C}\varepsilon^2}\rceil$, which is $O(\,\varepsilon^{-2} (\ln(n)+3.5)\,)$. 

\Ky{Note that} the distributions from which $T$ is sampled are such that the the average of $\|Ty\|$ over $T$ is equal to $\|y\|$. Lemma \ref{randomprojectionlemma} is a {\it concentration of measure} result, and it states that the probability of a single sampling of $T$ yielding a value of $\|Ty\|$ very close to its mean approaches 1 as fast as a negative exponential of $k$ approaches zero.

We shall also need a squared version of the random projection lemma \cite{dasgupta}.
\begin{lemma}[Random projection lemma, squared version]
  \label{randomprojectionlemma2}
For all $\varepsilon\in(0,1)$ and all vectors $y\in\mathbb{R}^m$, \Ky{let $T$ be a $k\times m$ random projector from one of the choices (1-4) above,} then 
\begin{equation} \label{eqn:01a}
\mbox{\sf Prob}(\,(1-\varepsilon)\|y\|^2 \le \|Ty\|^2 \le (1+\varepsilon)\|y\|^2\,) \ge 1 - 2e^{-\mathcal{C}(\varepsilon^2-\varepsilon^3) k}
\end{equation}
for some constant $\mathcal{C} > 0$ (independent of $m,k,\varepsilon$).
\end{lemma}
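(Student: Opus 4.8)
The plan is to reduce to the Gaussian case, prove the bound there via a Chernoff argument on a chi-squared variable, and then observe that the remaining sub-gaussian choices run identically up to the value of the constant $\mathcal{C}$. First I would normalize: since both $\|Ty\|^2$ and $\|y\|^2$ scale by $\lambda^2$ under $y\mapsto\lambda y$, it suffices to treat the case $\|y\|=1$.

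For the Gaussian projector $T=\frac{1}{\sqrt{k}}U$ with $U$ having i.i.d.\ $\mathcal{N}(0,1)$ entries, each coordinate $(Uy)_i=\sum_j U_{ij}y_j$ is $\mathcal{N}(0,\|y\|^2)=\mathcal{N}(0,1)$, and the $k$ coordinates are independent because the rows of $U$ are. Hence $k\|Ty\|^2=\sum_{i=1}^k (Uy)_i^2$ is a $\chi^2_k$ variable with mean $k$, which confirms $\mathbb{E}\|Ty\|^2=\|y\|^2$. The two deviation events are then controlled separately by the moment-generating-function (Chernoff) method, using $\mathbb{E}[e^{tZ^2}]=(1-2t)^{-1/2}$ for a standard normal $Z$ and $t<\frac12$.

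For the upper tail I would write $\mbox{\sf Prob}\big(\sum_i (Uy)_i^2\ge(1+\varepsilon)k\big)\le\big((1-2t)^{-1/2}e^{-t(1+\varepsilon)}\big)^k$ and optimize over $t$, obtaining $t=\varepsilon/(2(1+\varepsilon))$ and the bound $\big((1+\varepsilon)e^{-\varepsilon}\big)^{k/2}$; the lower tail is analogous with a sign flip, giving $\big((1-\varepsilon)e^{\varepsilon}\big)^{k/2}$. Converting these to the stated form uses the elementary logarithmic estimates $\ln(1+\varepsilon)\le\varepsilon-\frac{\varepsilon^2}{2}+\frac{\varepsilon^3}{3}$ and $\ln(1-\varepsilon)\le-\varepsilon-\frac{\varepsilon^2}{2}$, valid for $\varepsilon\in(0,1)$, which yield $(1+\varepsilon)e^{-\varepsilon}\le e^{-\varepsilon^2/2+\varepsilon^3/3}$ and $(1-\varepsilon)e^{\varepsilon}\le e^{-\varepsilon^2/2}$. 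Each tail is therefore at most $e^{-\mathcal{C}(\varepsilon^2-\varepsilon^3)k}$ with, for instance, $\mathcal{C}=\frac14$, and a union bound over the two tails supplies the leading factor $2$.

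The point to track most carefully — and the source of the $-\varepsilon^3$ correction in the exponent — is the upper tail, where the cubic Taylor term of $\ln(1+\varepsilon)$ weakens the rate relative to the lower tail (which is in fact stronger, with rate $\varepsilon^2/2$), so the combined exponent is governed by the upper one. The remaining work is to pass from the exact Gaussian computation to the other three projectors: for the $\pm1$ and Achlioptas matrices the coordinates $(Uy)_i$ are no longer Gaussian, but they are zero-mean and sub-gaussian, so $\mathbb{E}[e^{t(Uy)_i^2}]$ still admits a bound of comparable form for small $t$, and the same Chernoff optimization reproduces the estimate with a possibly different constant; taking $\mathcal{C}$ to be the smallest constant over the four cases gives a single uniform statement. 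Establishing this sub-gaussian moment bound uniformly — rather than leaning on the closed-form chi-squared MGF — is the step requiring the most care.
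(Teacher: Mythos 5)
The paper never proves this lemma: it is stated and cited to the literature (the reference \cite{dasgupta}, i.e.\ Dasgupta--Gupta), so there is no internal proof to compare against. Your argument is precisely the standard proof from that line of work, and the Gaussian computation checks out: $k\|Ty\|^2\sim\chi^2_k$, the Chernoff optimum $t=\varepsilon/(2(1+\varepsilon))$ gives the upper-tail bound $\bigl((1+\varepsilon)e^{-\varepsilon}\bigr)^{k/2}\le e^{-\frac{k}{4}(\varepsilon^2-\frac{2}{3}\varepsilon^3)}\le e^{-\frac{k}{4}(\varepsilon^2-\varepsilon^3)}$, the lower tail is even stronger ($e^{-\varepsilon^2 k/4}$), and the union bound yields the statement with $\mathcal{C}=\tfrac14$. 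Your identification of the upper tail's cubic Taylor term as the source of the $-\varepsilon^3$ correction is exactly right.

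One caveat, since the lemma as stated covers all four projector choices. Your closing sketch is adequate for choices 3--4: there the coordinates $(Uy)_i$ are independent, zero-mean, unit-variance and sub-gaussian, so $(Uy)_i^2$ is sub-exponential, and a Bernstein/Chernoff bound gives a tail of the form $e^{-c\varepsilon^2 k}$, which trivially implies the stated $e^{-c(\varepsilon^2-\varepsilon^3)k}$. But choice 1 (orthogonal projection onto a random $k$-dimensional subspace) has neither independent entries nor independent rows, so ``zero-mean sub-gaussian coordinates'' does not apply verbatim. The standard repair — and in fact the actual content of the cited reference — is rotational invariance: for fixed unit $y$, the law of $\|Ty\|$ under a random subspace projection equals the law of the norm of the first $k$ coordinates of a uniformly random unit vector, i.e.\ of $\|(g_1,\dots,g_k)\|/\|g\|$ with $g$ standard Gaussian in $\mathbb{R}^m$; two applications of your chi-squared bound (numerator and denominator) then finish that case. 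With that one addition your proof is complete.
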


Another relevant result about the JLL is the preservation of angles (or scalar product) \Ky{with high probability}. This result is not new, nor is it surprising in light of the JLL, but we report a proof here for completeness. \Ky{Indeed, given any $x, y \in \mathbb{R}^n$, and $T$ a $k\times m$ random projector from one of the choices (1-4) above, by applying Lemma   \ref{randomprojectionlemma2} on two vectors $x+y, \, x-y$ and using the union bound, we have
\begin{align*}
|\langle Tx, Ty \rangle  - \langle x, y \rangle| & = \tfrac{1}{4} \big|\|T(x+y)\|^2 - \|T(x-y)\|^2 -  \|x+y\|^2 + \|x-y\|^2 \big| \\
& \le  \tfrac{1}{4} \big|\|T(x+y)\|^2  -  \|x+y\|^2 \big| + \tfrac{1}{4}\big|\|T(x-y)\|^2 -  \|x-y\|^2\big| \\
& \le  \tfrac{\varepsilon}{4} (\|x +y\|^2 + \|x-y\|^2) = \tfrac{\varepsilon}{2} (\|x\|^2 + \|y\|^2),
\end{align*}
with probability at least $1 - 4 e^{-\mathcal{C}\varepsilon^2k}$. We can strengthen this further to obtain the following useful result.
\begin{proposition} \label{cor:angles}
	Let $T: \mathbb{R}^m \to \mathbb{R}^k$ be a $k\times m$ random projector from one of the choices (1-4) above  and let $0 < \varepsilon < 1$. Then there is a universal constant $\mathcal{C}$ such that, for any $x, y \in \mathbb{R}^n$:
	$$-\varepsilon \|x\| \, \|y\| \, \le \, \langle Tx, Ty \rangle - \langle x, y\rangle \, \le \, \varepsilon \|x\| \, \|y\|$$
	with probability at least $1 - 4e^{-\mathcal{C}\varepsilon^2 k}$.
\end{proposition}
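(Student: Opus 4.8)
The plan is to derive the proposition from the scalar-product estimate established in the displayed chain of inequalities immediately preceding the statement, namely
\[
  |\langle Tx, Ty \rangle - \langle x, y \rangle| \le \tfrac{\varepsilon}{2}\big(\|x\|^2 + \|y\|^2\big),
\]
which holds with probability at least $1 - 4e^{-\mathcal{C}\varepsilon^2 k}$, by exploiting homogeneity. The crucial observation is that the quantity $\langle Tx, Ty\rangle - \langle x, y\rangle$ is \emph{bilinear} in $(x,y)$ and the target bound $\varepsilon\|x\|\,\|y\|$ is homogeneous of the matching bidegree, whereas the arithmetic-mean bound $\tfrac{\varepsilon}{2}(\|x\|^2+\|y\|^2)$ is not scale-invariant in the same way. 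Normalizing $x$ and $y$ to unit length therefore turns the weaker arithmetic-mean estimate into the stronger geometric-mean estimate, since $\tfrac12(\|\hat x\|^2+\|\hat y\|^2)=1=\|\hat x\|\,\|\hat y\|$ for unit vectors.

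Concretely, I would first dispose of the degenerate case: if $x=0$ or $y=0$ then $\langle Tx,Ty\rangle-\langle x,y\rangle=0$ and the two-sided inequality holds trivially with probability $1$. Assuming $x\ne 0$ and $y\ne 0$, set $\hat x = x/\|x\|$ and $\hat y = y/\|y\|$, and apply the preliminary estimate to the pair $(\hat x,\hat y)$; because $\|\hat x\|=\|\hat y\|=1$ its right-hand side collapses to $\tfrac{\varepsilon}{2}(1+1)=\varepsilon$, giving
\[
  |\langle T\hat x, T\hat y\rangle - \langle \hat x, \hat y\rangle| \le \varepsilon.
\]
Finally, using linearity of $T$ and bilinearity of the inner product, $\langle T\hat x,T\hat y\rangle = \tfrac{1}{\|x\|\,\|y\|}\langle Tx,Ty\rangle$ and $\langle \hat x,\hat y\rangle = \tfrac{1}{\|x\|\,\|y\|}\langle x,y\rangle$, so multiplying the displayed inequality through by $\|x\|\,\|y\|>0$ yields $|\langle Tx,Ty\rangle-\langle x,y\rangle|\le \varepsilon\|x\|\,\|y\|$, which is exactly the asserted two-sided bound.

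The main point requiring care is not the algebra but the bookkeeping of the probability. One must verify that rescaling the arguments does not force a fresh, independent invocation of the concentration lemma (which would compound the failure probabilities). This is fine here: the estimate for $(\hat x,\hat y)$ rests on a single application of the squared random projection lemma (Lemma~\ref{randomprojectionlemma2}) to the two vectors $\hat x+\hat y$ and $\hat x-\hat y$, combined with a union bound to produce the factor $4$. Since $T$ acts linearly and the inner product is bilinear, the entire estimate for the original $(x,y)$ is controlled by that same good event for $(\hat x,\hat y)$, so the probability bound remains $1-4e^{-\mathcal{C}\varepsilon^2 k}$ unchanged, and the universal constant $\mathcal{C}$ is inherited from Lemma~\ref{randomprojectionlemma2}.
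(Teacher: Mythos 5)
Your proof is correct and follows essentially the same route as the paper's: the paper's own (one-line) proof is precisely to apply the preceding arithmetic-mean estimate to the normalized vectors $x/\|x\|$ and $y/\|y\|$ and rescale. Your additional care about the degenerate case $x=0$ or $y=0$ and about the probability bookkeeping (a single invocation of Lemma~\ref{randomprojectionlemma2} on $\hat x+\hat y$, $\hat x-\hat y$, hence no compounding of failure probabilities) is exactly the right justification, which the paper leaves implicit.
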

\begin{proof} {\it Proof.}
	Apply the above result for $u= \frac{x}{\|x\|}$ and $v = \frac{y}{\|y\|}$. \qed
\end{proof}

From now on, when we say a ``random projector", we always mean a $k\times m$ random matrix from one of the choices (1-4) in Section 2.
}

\section{Preserving LP feasibility}
\label{s:lpfeas}
Consider the Linear Feasibility Problem (LFP)
\[\mathcal{F}=\mathcal{F}(P)\equiv\{x\in\mathbb{R}^n_+\;|\;Ax=b\}\]
and its randomly projected version
\[T\mathcal{F}=\mathcal{F}(P_T)\equiv\{x\in\mathbb{R}^n_+\;|\;TAx=Tb\}.\]
In this section we prove that $F\not=\varnothing$ if and only if $T\mathcal{F}\not=\varnothing$ w.o.p.

We remark that, for any $k\times m$ matrix $T$, any feasible solution for $\mathcal{F}$ is also a feasible solution for $T\mathcal{F}$ by linearity. So the real issue is proving that if $\mathcal{F}$ is infeasible then $T\mathcal{F}$ is also infeasible w.o.p. This is where we exploit the fact that $T$ is a random projector. More precisely, we prove the following statements about linear infeasibility w.o.p.:
\begin{enumerate}
\item a nonzero vector is randomly projected to a nonzero vector;
\item if $x$ is not a certificate for $\mathcal{F}$, then it is not a certificate for $T\mathcal{F}$;\label{enum:xnotcert}
\item if $x$ is not a certificate for $\mathcal{F}$ for all $x$ in a finite set $X$, then the same follows for $T\mathcal{F}$;\label{enum:Xnotcert}
\Ky{\item if $b$ is not in the convex hull of $A$, then $Tb$ is not in the convex hull of $TA$.}\label{enum:convfeas}
\item if $b$ is not in the cone of $A$, then $Tb$ is not in the cone of $TA$.\label{enum:mainthm}
\end{enumerate}

The first result is actually a corollary of Lemma \ref{randomprojectionlemma}. We denote by $E^{\mbox{\sf\scriptsize c}}$ the complement of an event $E$.
\begin{corollary}
Let $T$ be a $k\times m$ random projector and $y\in\mathbb{R}^m$ with $y\not=0$. Then we have 
\begin{equation}
\mbox{\sf Prob}(Ty \neq 0) \ge 1 - 2e^{-\mathcal{C}k}. \label{eq:cor1} 
\end{equation}
for some constant $\mathcal{C}>0$ (independent of $n,k$).
\label{cor1}
\end{corollary}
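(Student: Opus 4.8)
The plan is to obtain the claim directly from the Random projection lemma (Lemma \ref{randomprojectionlemma}), specialized to a single fixed value of the distortion parameter. First I would fix an arbitrary constant $\varepsilon\in(0,1)$, for concreteness $\varepsilon=\tfrac12$, and apply Lemma \ref{randomprojectionlemma} to the given vector $y$. This yields the two-sided estimate $(1-\varepsilon)\|y\|\le\|Ty\|\le(1+\varepsilon)\|y\|$ with probability at least $1-2e^{-\mathcal{C}\varepsilon^2 k}$.

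The key observation is that only the lower inequality is needed. Since $y\neq 0$ we have $\|y\|>0$, and since $\varepsilon<1$ we have $1-\varepsilon>0$; hence on the favorable event above one has $\|Ty\|\ge(1-\varepsilon)\|y\|>0$, which is exactly the assertion $Ty\neq 0$. In other words, the high-probability event of Lemma \ref{randomprojectionlemma} is contained in the event $\{Ty\neq 0\}$, so the latter inherits the same lower bound $1-2e^{-\mathcal{C}\varepsilon^2 k}$ on its probability.

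Finally, because $\varepsilon$ is now a fixed absolute constant, the factor $\varepsilon^2$ can be absorbed into the universal constant: setting $\mathcal{C}'=\mathcal{C}\varepsilon^2$ (e.g.\ $\mathcal{C}'=\mathcal{C}/4$ for $\varepsilon=\tfrac12$) gives $\mbox{\sf Prob}(Ty\neq 0)\ge 1-2e^{-\mathcal{C}'k}$, which is the desired bound. This also explains why the constant reported in the statement of Corollary \ref{cor1} no longer lists $\varepsilon$ among the quantities it is independent of: $\varepsilon$ has simply been committed to a fixed value.

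There is essentially no hard step here, as the result is a one-line consequence of the concentration estimate; the only point deserving care is the bookkeeping of constants. One must commit to a single value of $\varepsilon$ before the probability bound can be made $\varepsilon$-free, and then fold the resulting $\varepsilon^2$ into the reported constant $\mathcal{C}$ — any other fixed choice of $\varepsilon\in(0,1)$ would work equally well and merely change the numerical value of that constant.
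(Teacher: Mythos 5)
Your proof is correct and follows essentially the same route as the paper: both apply Lemma \ref{randomprojectionlemma} and observe that on the concentration event one has $\|Ty\|\ge(1-\varepsilon)\|y\|>0$, so that event is contained in $\{Ty\neq 0\}$. The only difference is cosmetic bookkeeping of the constant: you fix $\varepsilon=\tfrac{1}{2}$ and absorb $\varepsilon^2$ into a renamed constant, whereas the paper notes the bound $1-2e^{-\mathcal{C}\varepsilon^2 k}$ holds for every $\varepsilon\in(0,1)$ and takes the supremum (letting $\varepsilon\to 1$) to keep the same $\mathcal{C}$; both yield the stated $\varepsilon$-free estimate.
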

\begin{proof}  {\it Proof.}
For any $\varepsilon\in (0,1)$, we define the following events:
\begin{eqnarray*}
\mathcal{A} & =& \big\{Ty \neq 0 \big \} \\
\mathcal{B} & =& \big \{(1-\varepsilon)\|y\| \le \|Ty\| \le
 (1+\varepsilon)\|y\| \big\}.
\end{eqnarray*}
By Lemma \ref{randomprojectionlemma} it follows that $\mbox{\sf Prob}(\mathcal{B}) \ge 1 - 2e^{-\mathcal{C}\varepsilon^2 k}$ for some constant $\mathcal{C}>0$ independent of $m,k,\varepsilon$.  On the other hand, $\mathcal{A}^{\mbox{\sf\scriptsize c}} \cap \mathcal{B} =\emptyset$, since otherwise, for any $\varepsilon\in(0,1)$ there is a mapping $T_1$ such that $T_1(y) = 0$ and $(1-\varepsilon)\|y\| \le \|T_1(y)\|,$ which altogether imply that $y = 0$ (a contradiction). Therefore, $\mathcal{B}\subseteq\mathcal{A}$, and we have $\mbox{\sf Prob}(\mathcal{A}) \ge \mbox{\sf Prob}(\mathcal{B}) \ge 1 - 2e^{-\mathcal{C}\varepsilon^2 k}$. This holds for all $0 < \varepsilon < 1$, so $\mbox{\sf Prob}(\mathcal{A})\ge 1-2 e^{\mathcal{C}k}$. \qed
\end{proof}  

The following theorem settles
points \ref{enum:xnotcert}-\ref{enum:Xnotcert} above.
\begin{theorem} \label{membershiplemma}
Let $T$ be a $k\times m$ random projector and $\mathcal{F}\equiv\{x\ge 0\;|\;Ax=b\}$ with $A$ an $m\times n$ matrix. Then for any $x\in\mathbb{R}^n$, we have:
\begin{enumerate}[(i)]
\item If $b = \sum\limits_{j=1}^n x_j A_j$ then $Tb=\sum\limits_{j=1}^n x_j TA_j$; \label{lm1}
\item If $b \neq \sum_{j=1}^n x_j A_j$ then $\mbox{\sf Prob}\, \bigg[ Tb \neq \sum_{j=1}^n x_j TA_j\bigg] \ge 1 - 2e^{-\mathcal{C}k}$; \label{lm2}
\item If $b \neq \sum_{j=1}^n x_j A_j$ for all $x \in X \subseteq \mathbb{R}^n$, where $|X|$ is finite, then \[\mbox{\sf Prob}\, \bigg[\forall x\in X\ Tb \neq \sum_{j=1}^n x_j TA_j \bigg] \ge 1 - 2|X|e^{-\mathcal{C}k};\] \label{lm3}
\end{enumerate}
for some constant $\mathcal{C}>0$ (independent of $n,k$).
\end{theorem}
\begin{proof}  {\it Proof.}
 Point \eqref{lm1} follows by linearity of $T$, and \eqref{lm2} by applying Cor.~\ref{cor1} to $Ax-b$. For \eqref{lm3}, the union bound on \eqref{lm2} yields:
\begin{eqnarray*}
\mbox{\sf Prob}\, \bigg[\forall x\in X\ Tb \neq \sum_{j=1}^n x_j TA_j \bigg] & = & \mbox{\sf Prob}\, \bigg[ \bigcap_{x \in X} \; \big\{ Tb \neq \sum_{j=1}^n x_j TA_j\big\}\bigg] \\
= 1 - \mbox{\sf Prob}\, \bigg[ \bigcup_{x \in X} \; \big \{ Tb \neq \sum_{j=1}^n x_j TA_j\big \}^{\mbox{\sf\scriptsize c}}\bigg] & \ge& 1 - \sum_{x \in X} \mbox{\sf Prob}\, \bigg[\big \{ Tb \neq\sum_{j=1}^n x_j TA_j\big \}^{\mbox{\sf\scriptsize c}}\bigg] \\
\mbox{\color{darkgrey} [by \eqref{lm2}]\hspace*{1cm}} & \ge& 1 - \sum_{x \in X} 2e^{-\mathcal{C}k} = 1 - 2|X|e^{-\mathcal{C}k},
\end{eqnarray*}
as claimed. \qed
\end{proof}

Thm.~\ref{membershiplemma} can be used to project certain types of integer programs. It also gives us an indication to why estimating the probability that $Tb \not\in\cone{A}$ is not straightforward. This event can be written as an intersection of uncountably many events $\{Tb\neq \sum_{j=1}^n x_j TA_j\}$ where $x\in\mathbb{R}_+^n$. Even if each of these occurs w.o.p., their intersection might still be small. As these events are dependent, however, we shall show that there is hope yet.

\subsection{Convex hull feasibility}
\Ky{
Next, we show that if the distance between a point  and a closed set is positive, it remains positive with high probability after applying a random projection. We consider the convex hull membership problem: given vectors $b, A_1,\ldots,A_n \in \mathbb{R}^{m}$, decide whether $b\in\mbox{\sf conv}(\{A_1,\ldots,A_n\})$.

We have the following result:
\begin{proposition}
\label{prop:cvxhull}
Given $A_1,\ldots,A_n \in \mathbb{R}^{m}$, let $C=\mbox{\sf conv}(\{A_1,\ldots,A_n\})$, $b \in \mathbb{R}^m$ such that $b \notin C$, $d =
\min\limits_{x \in C} \|b-x\|$ and $D = \max\limits_{1\le j \le n} \|b
-A_j\|$. Let $T:\mathbb{R}^m \to \mathbb{R}^k$ be a random projector. Then
\begin{equation}
  \mbox{\sf Prob} \big [Tb \notin TC \big] \ge 1 - 2n^2 e^{-\mathcal{C}(\varepsilon^2-\varepsilon^3)k}
  \label{eqproj}
\end{equation}
for some constant $\mathcal{C}$ (independent of $m,n,k,d,D$) and
$\varepsilon < \frac{d^2}{D^2}$.
\end{proposition}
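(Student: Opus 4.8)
The plan is to reduce the statement about the uncountable set $C$ to a bound on finitely many pairwise scalar products, exploiting the fact that the coefficients of a convex combination sum to one. First I would reformulate the target event. Writing any $x\in C$ as $x=\sum_{j=1}^n\lambda_j A_j$ with $\lambda_j\ge 0$ and $\sum_j\lambda_j=1$, note that $Tb\in TC$ holds if and only if there is such an $x$ with $T(b-x)=0$. Setting $v_j:=b-A_j$, so that $\|v_j\|\le D$, the identity $\sum_j\lambda_j=1$ gives $b-x=\sum_{j}\lambda_j v_j$, and by definition of $d$ we have $\|b-x\|\ge d$ for every $x\in C$. Hence it suffices to show that, with the claimed probability, $\|T(b-x)\|>0$ \emph{simultaneously} for all $x\in C$.

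The key step is to control a quadratic form. Expanding,
\[
\|T(b-x)\|^2=\sum_{i,j}\lambda_i\lambda_j\,\langle Tv_i,Tv_j\rangle,\qquad \|b-x\|^2=\sum_{i,j}\lambda_i\lambda_j\,\langle v_i,v_j\rangle .
\]
If I can guarantee $|\langle Tv_i,Tv_j\rangle-\langle v_i,v_j\rangle|\le\varepsilon D^2$ for all $i,j$, then the difference of the two quadratic forms is bounded, uniformly in $\lambda$, by $\varepsilon D^2\big(\sum_j\lambda_j\big)^2=\varepsilon D^2$. Consequently $\|T(b-x)\|^2\ge\|b-x\|^2-\varepsilon D^2\ge d^2-\varepsilon D^2$, which is strictly positive precisely because $\varepsilon<d^2/D^2$. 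This yields $T(b-x)\ne 0$ for every $x\in C$ at once, hence $Tb\notin TC$. This is exactly where the hypothesis $\varepsilon<d^2/D^2$ enters, and where the uncountability of $C$ dissolves: bilinearity together with the normalization $\sum_j\lambda_j=1$ converts a uniform statement over the simplex into finitely many scalar-product estimates.

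It remains to produce those finitely many estimates and to count the failure probability so as to match the factor $2n^2$ and the exponent $\varepsilon^2-\varepsilon^3$. For the off-diagonal terms I would use polarization, $\langle Tv_i,Tv_j\rangle=\tfrac14\big(\|T(v_i+v_j)\|^2-\|T(v_i-v_j)\|^2\big)$, and apply the squared random projection lemma (Lemma~\ref{randomprojectionlemma2}) to each of $v_i+v_j$ and $v_i-v_j$; this gives $|\langle Tv_i,Tv_j\rangle-\langle v_i,v_j\rangle|\le\tfrac{\varepsilon}{2}(\|v_i\|^2+\|v_j\|^2)\le\varepsilon D^2$. For the diagonal terms I apply Lemma~\ref{randomprojectionlemma2} directly to each $v_j$, giving $|\,\|Tv_j\|^2-\|v_j\|^2\,|\le\varepsilon\|v_j\|^2\le\varepsilon D^2$. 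Counting applications, the $\binom{n}{2}$ off-diagonal pairs contribute $n(n-1)$ events and the diagonal contributes $n$, for a total of $n^2$ events, each failing with probability at most $2e^{-\mathcal{C}(\varepsilon^2-\varepsilon^3)k}$; a union bound then delivers the stated $1-2n^2e^{-\mathcal{C}(\varepsilon^2-\varepsilon^3)k}$. (Using Proposition~\ref{cor:angles} instead would also close the argument, but would produce the exponent $\varepsilon^2$ rather than $\varepsilon^2-\varepsilon^3$.)

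The conceptually hardest point is the one flagged in the text after Thm.~\ref{membershiplemma}: ruling out membership in $TC$ is a priori an intersection of uncountably many events, so a naive union bound fails. The main work is therefore recognizing the bilinear-form reduction above, which replaces uniform control over $C$ by control of the Gram matrix of the $v_j$; everything else is routine. A secondary point to verify carefully is that the single good event --- the intersection of the $n^2$ scalar-product estimates --- is fixed \emph{before} quantifying over $x$, so that the bound on $\|T(b-x)\|^2$ genuinely holds for all $x\in C$ on that one event, rather than for each $x$ on a possibly different event.
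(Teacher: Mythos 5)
Your proposal is correct and takes essentially the same route as the paper's proof: both condition on the single event that Lemma~\ref{randomprojectionlemma2} holds for the vectors $b-A_j$, $(b-A_i)+(b-A_j)$ and $(b-A_i)-(b-A_j)=A_j-A_i$, use the polarization identity together with $\sum_j\lambda_j=1$ to obtain the uniform lower bound $\|T(b-x)\|^2\ge d^2-\varepsilon D^2>0$ over all of $C$, and finish with a union bound over the same $n+2\binom{n}{2}=n^2$ events. Your packaging of the estimates as two-sided control of the Gram matrix of the $v_j$ is merely a cleaner bookkeeping of the paper's inline one-sided bounds, and your probability count and use of $\varepsilon<d^2/D^2$ match the paper exactly.
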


\begin{proof}  {\it Proof.}
Let $S_\varepsilon$ be the event that both \[(1-\varepsilon)\|x-y\|^2
\le \|T(x-y)\|^2 \le (1+\varepsilon)\|x-y\|^2\]
 and
\[(1-\varepsilon)\|x+y\|^2 \le \|T(x+y)\|^2 \le
(1+\varepsilon)\|x+y\|^2 \] hold for all $x,y \in \{0, b-A_1,\ldots,b
-A_n\}$. Assume $S_\varepsilon$ occurs. Then for all real $\lambda_j
\ge 0$ with $\sum\limits_{j=1}^n \lambda_j = 1$, we have:
{
\begin{eqnarray}
&& \|Tb- \sum_{j=1}^n \lambda_j TA_j \|^2 
 =  \|\sum_{j=1}^n \lambda_j T(b - A_j) \|^2  \quad \mbox{(by linearity of $T$ and $\sum_j \lambda_j = 1$)} \nonumber \\ [-0.2em]
& = & \sum_{j=1}^n \lambda^2_j \|T(b - A_j) \|^2 + 2 \sum_{1\le i<j\le n} \lambda_i \lambda_j \langle T(b - A_i), T(b-A_j) \rangle \nonumber \\ [-0.2em]
 &=& \sum_{j=1}^n \lambda^2_j \|T(b - A_j) \|^2 + \frac{1}{2}\sum_{1\le i<j\le n} \lambda_i \lambda_j\bigg ( \|T(b - A_i + b - A_j)\|^2 - \|T(A_i-A_j)\|^2 \bigg). \label{key06271}
\end{eqnarray}
Here the last equality follows from the fact that $\langle x, y \rangle = \tfrac{1}{4} (\|x+y\|^2 - \|x -y\|^2)$ for all vectors $x,y$. Moreover, since $S_\varepsilon$ occurs, we have 
$$\|T(b - A_j) \|^2 \ge (1-\varepsilon) \|b - A_j \|^2$$
and 
$$\|T(b - A_i + b - A_j)\|^2 - \|T(A_i-A_j)\|^2 \ge (1 - \varepsilon) \big \|b - A_i + b - A_j \big \|^2  - (1 + \varepsilon) \|A_i - A_j\|^2$$
for all $1 \le i <j \le n$. Therefore, the RHS in (\ref{key06271}) is greater than or equal to
\begin{eqnarray*}
 & & (1 - \varepsilon) \sum_{j=1}^n \lambda^2_j \|b - A_j\|^2 +  \frac{1}{2}\sum_{1\le i<j\le n} \lambda_i \lambda_j \bigg ( (1 - \varepsilon) \big \|b - A_i + b - A_j \big \|^2  - (1 + \varepsilon) \|A_i - A_j\|^2 \bigg)\\ [-0.2em]
 &= & \|b- \sum_{j=1}^n \lambda_j A_j\|^2 
- \varepsilon \bigg(\sum_{j=1}^n \lambda^2_j \|b - A_j\|^2 + \frac{1}{2}\sum_{1\le i<j\le n} \lambda_i \lambda_j ( \|b - A_i + b-A_j\|^2 + \|A_i - A_j\|^2 )\bigg) \\ [-0.2em]
 &=& \|b- \sum_{j=1}^n \lambda_j A_j\|^2 
- \varepsilon \bigg(\sum_{j=1}^n \lambda^2_j \|b - A_j\|^2 + \sum_{1\le i<j\le n} \lambda_i \lambda_j ( \|b - A_i\|^2 + \|b - A_j\|^2 )\bigg).
\end{eqnarray*}
}%
From the definitions of $d$ and $D$, we have $\|b-\sum_{j=1}^n \lambda_j A_j\|^2 \ge d^2$ and $\|b- A_i\| \le D^2$ for all $1\le i \le n$. Therefore:
{
\begin{eqnarray*}
  \|Tb- \sum_{j=1}^n
\lambda_j TA_j \|^2 \ge d^2 - \varepsilon D^2 \bigg(\sum_{j=1}^n
  \lambda^2_j + 2 \sum_{1\le i<j\le n} \lambda_i \lambda_j \bigg) 
= d^2 - \varepsilon D^2 \bigg(\sum_{j=1}^n \lambda_j\bigg)^2 = d^2 -\varepsilon D^2 > 0
\end{eqnarray*}
} due to the fact that $\sum_{j=1}^n \lambda_j = 1$ and the choice of $\varepsilon <
\frac{d^2}{D^2}$. 

Now, since $ \|Tb- \sum\limits_{j=1}^n \lambda_j
TA_j \|^2>0$ for all choices of $\lambda \ge 0$ with  $\sum_{j=1}^n \lambda_j = 1$, it follows that $Tb
\notin\mbox{\sf conv}(\{TA_1, \ldots, TA_n\})$. 

In summary, if
$S_\varepsilon$ occurs, then $Tb \notin \mbox{\sf conv}
(\{TA_1,\ldots, TA_n\})$.  Thus, by Lemma
\ref{randomprojectionlemma2} and the union bound,
{\small
\begin{equation*}
\mbox{\sf Prob} (Tb \notin TC) \ge \mbox{\sf Prob} (S_\varepsilon)
\ge 1 - 2\big(n + 2{\scriptsize {n \choose 2}}\big)
e^{-\mathcal{C}(\varepsilon^2-\varepsilon^3) k} = 1 -2
n^2e^{-\mathcal{C}(\varepsilon^2-\varepsilon^3) k}
\end{equation*}
}
for some constant $\mathcal{C} > 0$. \qed
\end{proof}

As an interesting aside, we remark that this proof can also be extended to show that disjoint polytopes project to disjoint polytopes with high probability.
}

\subsection{Cone feasibility}
We now deal with the last (and most relevant) result: if $b$ is not in the cone of the columns of $A$, then $Tb$ is not in the cone of the columns of $TA$ w.o.p. We first define the $A$-norm of $x\in\cone{A}$ as
\[\|x\|_A = \min\big\{\sum\limits_{j=1}^{n} \lambda_j \;\big|\;\lambda \ge 0\land x =\sum\limits_{j=1}^{n} \lambda_j A_j \big\}.\]
For each $x\in\cone{A}$, we say that $\lambda\in\mathbb{R}^n_+$ yields a \emph{minimal $A$-representation} of $x$ if and only if $\sum\limits_{j=1}^{n} \lambda_j = \|x\|_A$. We define $\mu_A = \max \{\|x\|_A \;|\; x \in\cone{A}\land\|x\|\le 1\}$; then, for all $x\in\cone{A}$, we have
\[\|x\| \le \|x\|_A \le \mu_A \|x\|.\]
In particular $\mu_A \ge 1$. Note that $\mu_A$ serves as a measure of worst-case distortion when we move from Euclidean to $\|\cdot\|_A$ norm.

For the next result, we assume we are given an estimate of a lower bound $\Delta$ to $d=\min\limits_{x \in C} \|b-x\|$, and also (without loss of generality) that $b$ and the column vectors of $A$ have unit Euclidean norm.
\begin{theorem}
Given an $m\times n$ matrix $A$ and $b\in\mathbb{R}^m$ s.t.~$b\not\in\cone{A}$.  Then for any $0<\varepsilon \Ky{<}\frac{\Delta^2}{\mu_A^2+2\mu_A\sqrt{1-\Delta^2}+1}$ \Ky{and any $k\times m$ random projector $T$ (such as one in Section 2),} we have
\begin{equation}
  \mbox{\sf Prob}(Tb \notin \cone{TA}) \ge 1 - 2\Ky{(n+1)(n+2)}e^{-\mathcal{C}(\varepsilon^2-\varepsilon^3)k}
  \label{eqnormAapp}
\end{equation}
for some constant $\mathcal{C}$ (independent of $m,n,k,\Delta$).
\label{thm:mainthm}
\end{theorem}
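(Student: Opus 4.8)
The plan is to mirror the convex-hull argument of Prop.~\ref{prop:cvxhull}, the essential new difficulty being that membership in $\cone{A}$ is witnessed by coefficients $\lambda\ge 0$ whose sum $\Lambda=\sum_j\lambda_j$ is \emph{unbounded}, whereas in the convex case the normalization $\Lambda=1$ was fixed and kept every error term under control. Suppose, for contradiction, that $Tb\in\cone{TA}$, say $Tb=\sum_j\lambda_j TA_j$ with $\lambda\ge 0$, and set $x:=\sum_j\lambda_j A_j\in\cone{A}$, so that $Tb=Tx$. Since $Tx$ depends only on $x$ and not on the representation chosen, I would replace $\lambda$ by a \emph{minimal} $A$-representation of $x$; this is the step that tames the unboundedness, because it forces $\Lambda=\|x\|_A\le\mu_A\|x\|$. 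The constant $\mu_A$ thus enters precisely to convert the combinatorial quantity $\sum_j\lambda_j$ into the geometric quantity $\|x\|$.

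The analytic core is one uniform estimate. On the event $S_\varepsilon$ that the two-sided squared bounds of Lemma~\ref{randomprojectionlemma2} hold simultaneously for every pairwise sum and difference of the vectors $\{b,A_1,\dots,A_n\}$ (so that, via the polarization identity as in Prop.~\ref{prop:cvxhull} and Prop.~\ref{cor:angles}, each $\langle TA_i,TA_j\rangle$ and $\langle Tb,TA_j\rangle$ lies within $\varepsilon$ of its unprojected value, all vectors being unit), I claim that
\[
  \Big\|Tb-\sum_{j}\lambda_j TA_j\Big\|^2 \;\ge\; \Big\|b-\sum_j\lambda_j A_j\Big\|^2-\varepsilon\Big(1+\sum_j\lambda_j\Big)^2
\]
holds \emph{simultaneously for all} $\lambda\ge 0$. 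I would prove this by expanding the left-hand square into $\|Tb\|^2$, $\langle Tb,TA_j\rangle$ and $\langle TA_i,TA_j\rangle$, replacing each by its unprojected counterpart up to an additive error of $\varepsilon$, and collecting terms; the nonnegativity of the $\lambda_j$ is exactly what lets the one-sided bounds combine into the clean $(1+\Lambda)^2$ error factor.

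Combining the two steps, the contradiction hypothesis gives $0\ge\|b-x\|^2-\varepsilon(1+\mu_A\|x\|)^2$ for the point $x\in\cone{A}$, so it suffices to prove that $\|b-x\|^2/(1+\mu_A\|x\|)^2$, minimized over the whole cone, exceeds the stated threshold. This last part is a purely deterministic optimization. Writing $x=s\,u$ with $u$ a unit direction of the cone and $s=\|x\|\ge0$, and putting $c=\langle b,u\rangle$, the one-dimensional minimization over $s$ yields $\min_{s\ge0}\frac{1-2cs+s^2}{(1+\mu_A s)^2}=\frac{1-c^2}{\mu_A^2+2\mu_A c+1}$ (attained at $s^\ast=\frac{c+\mu_A}{1+\mu_A c}$). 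This expression is decreasing in $c$ on the relevant range, so the cone minimum is taken at the largest admissible $c$, namely $c=\max_{u}\langle b,u\rangle=\sqrt{1-d^2}$, where $d=\min_{x\in\cone{A}}\|b-x\|$; here I use that the nearest cone point $x^\dagger$ satisfies $b-x^\dagger\perp x^\dagger$, whence $\|x^\dagger\|=\sqrt{1-d^2}$. This gives $\min_{x\in\cone{A}}\frac{\|b-x\|^2}{(1+\mu_A\|x\|)^2}=\frac{d^2}{\mu_A^2+2\mu_A\sqrt{1-d^2}+1}$, which is increasing in $d$ and hence at least $\frac{\Delta^2}{\mu_A^2+2\mu_A\sqrt{1-\Delta^2}+1}>\varepsilon$, the required contradiction. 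Finally $\mbox{\sf Prob}(Tb\notin\cone{TA})\ge\mbox{\sf Prob}(S_\varepsilon)$, and union-bounding the $O(n^2)$ squared bounds of Lemma~\ref{randomprojectionlemma2} over the sums and differences delivers $\mbox{\sf Prob}(S_\varepsilon)\ge 1-2(n+1)(n+2)e^{-\mathcal{C}(\varepsilon^2-\varepsilon^3)k}$.

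The step I expect to be the main obstacle is the first one: recognizing that the unbounded cone coefficients must be controlled, and that passing to the minimal $A$-representation (hence introducing $\mu_A$) is exactly the device that turns $\sum_j\lambda_j$ into $\|x\|$ and so keeps the error term $\varepsilon(1+\Lambda)^2$ finite. Once the uniform inequality with this error term is in place, the rest is either the same polarization expansion as in Prop.~\ref{prop:cvxhull} or a routine calculus optimization whose answer is engineered to coincide with the admissible range of $\varepsilon$.
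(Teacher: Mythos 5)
Your proposal is correct, and its probabilistic core coincides with the paper's proof: you use the same event $S_\varepsilon$ (squared Johnson--Lindenstrauss bounds on all pairwise sums and differences drawn from $\{0,b,A_1,\dots,A_n\}$), the same union-bound count $2(n+1)(n+2)$, and, crucially, the same device of passing to a minimal $A$-representation so that the polarization expansion yields the uniform estimate $\|Tb-Tx\|^2\ge\|b-x\|^2-\varepsilon\big(1+\|x\|_A\big)^2$ for all $x\in\cone{A}$ simultaneously --- this is exactly inequality \eqref{eq:7202} in the paper, and $\|x\|_A\le\mu_A\|x\|$ enters identically. Where you genuinely diverge is the deterministic finish. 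The paper proves a geometric Claim, $\|b-x\|^2\ge\alpha^2-2\alpha\|p\|+1$ with $\alpha=\|x\|$ and $p$ the projection of $b$ onto the cone, then views the resulting lower bound as a quadratic in $\alpha$ and forces positivity via a discriminant condition; you instead minimize the ratio $\|b-x\|^2/(1+\mu_A\|x\|)^2$ in closed form, first along each ray (giving $\frac{1-c^2}{\mu_A^2+2\mu_A c+1}$ at $s^*=\frac{c+\mu_A}{1+\mu_A c}$, which I have checked) and then over directions via $c\le\sqrt{1-d^2}$. The two computations are mathematically equivalent --- both rest on the fact that $\langle b,u\rangle\le\|p\|=\sqrt{1-d^2}$ for every unit $u\in\cone{A}$, and both produce exactly the threshold $\frac{d^2}{\mu_A^2+2\mu_A\sqrt{1-d^2}+1}$ --- but yours makes the provenance of the odd-looking admissible range for $\varepsilon$ transparent (it is literally the cone-wide minimum of the ratio), while the paper's discriminant argument avoids calculus and any discussion of where the minimum is attained.

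Two loose ends in your write-up, both easily repaired and neither a conceptual gap. First, the equality $\max_u\langle b,u\rangle=\sqrt{1-d^2}$ needs, besides the orthogonality $b-p\perp p$ (which gives attainment at $u=p/\|p\|$), the one-line upper bound: for any unit $u\in\cone{A}$ with $\langle b,u\rangle>0$, the cone point $\langle b,u\rangle u$ lies at distance $\sqrt{1-\langle b,u\rangle^2}\ge d$ from $b$. Second, your closed-form ray minimum is only valid when the critical point is feasible, i.e.\ $1+\mu_A c>0$; for directions with $c\le-1/\mu_A$ (possible only when $\mu_A>1$) the infimum along the ray is $1/\mu_A^2$ instead, and one must check separately that $1/\mu_A^2\ge\frac{d^2}{\mu_A^2+2\mu_A\sqrt{1-d^2}+1}$, which holds because it reduces to $\big(1+\mu_A\sqrt{1-d^2}\big)^2\ge 0$; relatedly, the quantity $\frac{1-c^2}{\mu_A^2+2\mu_Ac+1}$ is \emph{not} monotone decreasing on all of $[-1,1]$ (it increases on $[-1,-1/\mu_A)$), so ``decreasing on the relevant range'' should be stated as: decreasing on $(-1/\mu_A,1]$, with the complementary range absorbed by the $1/\mu_A^2$ bound. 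This small case analysis plays the same role as the paper's separate treatment of the degenerate case $p=0$ in its proof of the Claim.
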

\begin{proof}  {\it Proof.}
For any $\varepsilon$ chosen as in the theorem statement, let $S_\varepsilon$ be the event that both 
$$(1-\varepsilon)\|x-y\|^2 \le \|T(x-y)\|^2 \le (1+\varepsilon)\|x-y\|^2$$ and $$(1-\varepsilon)\|x+y\|^2 \le \|T(x+y)\|^2 \le (1+\varepsilon)\|x+y\|^2$$
 hold for all $x,y \in \{\Ky{0}, b,A_1,\ldots,A_n\}$. By Lemma \ref{randomprojectionlemma}, we have
$$\mbox{\sf Prob} (S_\varepsilon) \ge 1 - 4{\Ky{n+2} \choose
  2}e^{-\mathcal{C}(\varepsilon^2-\varepsilon^3) k} = 1 -
2\Ky{(n+1)(n+2)}e^{-\mathcal{C}(\varepsilon^2-\varepsilon^3) k}$$ for some
constant $\mathcal{C}$ (independent of $m,n,k,d$). We will prove that
if $S_\varepsilon$ occurs, then we have $Tb \notin \mbox{\sf cone}
\{TA_1,\ldots, TA_n\}$. Assume that $S_\varepsilon$
occurs. Consider an arbitrary $x \in \mbox{\sf
  cone}\{A_1,\ldots,A_n\}$ and let $\sum\limits_{j=1}^{n} \lambda_j
A_j$ be a minimal $A$-representation of $x$. Then we have:
{\small
\begin{eqnarray}
&&\|Tb - Tx\|^2 =  \|Tb - \sum_{j=1}^n \lambda_j TA_j \|^2 \nonumber \\ [-0.2em]
&= & \|Tb\|^2 + \sum_{j=1}^n \lambda_j^2 \|TA_j\|^2 - 2\sum_{j=1}^n \lambda_j \langle Tb ,TA_j \rangle +   2\sum_{1 \le i < j \le n} \lambda_i\lambda_i \langle TA_i ,TA_j \rangle \nonumber \\ [-0.2em]
&=&\!\!\|Tb\|^2\!\!+\!\!\sum_{j=1}^n\!\lambda_j^2 \|TA_j\|^2\!\!+\!\!\sum_{j=1}^n \frac{\lambda_j}{2} (\|T(b\!-\!A_j)\|^2\!\!-\!\|T(b\!+\!A_j)\|^2 )\!+\!\!\!\!\!\!\sum_{1 \le i < j \le n}\!\!\!\!\!\!\frac{\lambda_i\lambda_j}{2}(\|T(A_i\!+\!A_j)\|^2\!\!-\!\|T(A_i\!-\!A_j)\|^2) \nonumber \\
\label{key205612}
\end{eqnarray}
}%
\Ky{Here the last equality follows by the fact that $\langle x, y \rangle = \tfrac{1}{4} (\|x+y\|^2 - \|x -y\|^2)$ for all vectors $x,y$.} Moreover, since $S_\varepsilon$ occurs, \Ky{we have 
$$\|Tb\|^2 \ge (1-\varepsilon) \|b\|^2, \qquad \|TA_j\|^2 \ge (1-\varepsilon) \|A_j\|^2 \quad \mbox{for all } 1 \le j \le n$$
and 
\begin{eqnarray*}
&&\|T(b - A_j)\|^2 - \|T(b + A_j)\|^2 \ge (1 - \varepsilon) \big \|b - A_j \big \|^2  - (1 + \varepsilon) \|b + A_j\|^2 \\
&&\|T(A_i\!+\!A_j)\|^2\!\!-\!\|T(A_i\!-\!A_j)\|^2  \ge (1 - \varepsilon) \|A_i\!+\!A_j\|^2\!\! - (1 + \varepsilon) \!\|A_i\!-\!A_j\|^2
\end{eqnarray*}
for all $1 \le i <j \le n$. Therefore, the RHS in (\ref{key205612}) is greater than or equal to}
\begin{eqnarray}
&& (1 - \varepsilon) \|b\|^2 + (1 - \varepsilon) \sum_{j=1}^n \lambda_j^2 \|A_j\|^2 + \sum_{j=1}^n \frac{\lambda_j}{2} ((1 - \varepsilon)\|b - A_j\|^2 - (1 + \varepsilon) \|b+A_j\|^2 ) \nonumber  \\ 
&& \hspace*{4cm} + \sum_{1 \le i < j \le n}\frac{\lambda_i\lambda_j}{2} ((1 - \varepsilon) \|A_i + A_j\|^2 - (1 + \varepsilon) \|A_i-A_j\|^2 ).
\end{eqnarray}
\Ky{Since we have assumed that 
$\|b\| = \|A_1\| = \ldots \|A_n\| = 1$, it can then} be rewritten as
\begin{eqnarray*}
&&  \|b - \sum_{j=1}^n \lambda_j A_j \|^2 
 - \varepsilon \bigg (1 + \sum_{j=1}^n \lambda_j^2 + 2 \sum_{i=j}^n \lambda_j + 2 \sum_{j \neq i} \lambda_i \lambda_j \bigg) \\
&= & \|b - \sum_{j=1}^n \lambda_j A_j \|^2 
- \varepsilon \big (1 + \sum_{j=1}^n \lambda_j \big)^2  \\
&= & \|b - x\|^2 
- \varepsilon \big (1 + \|x\|_A \big)^2  \qquad \mbox{\Ky{(by the definition of $A$-norm).}}
\end{eqnarray*}
\Ky{In summary, we have proved that, when the event $S_\varepsilon$ occurs, then 
\begin{equation}  \label{eq:7202}
\|Tb - Tx\| \ge \|b - x\|^2 
- \varepsilon \big (1 + \|x\|_A \big)^2.
\end{equation}
}

Denote by $\alpha=\|x\|$ and let $p$ be the \Ky{orthogonal} projection of $b$ to
$\mbox{\sf cone}\{A_1,\ldots,A_n\}$, which \Ky{means} $\|b-p\|=\min
\{\|b-x\|\;|\;x \in \mbox{\sf cone}\{A_1,\ldots,A_n\}\}$. We will need to use the following claim:
\begin{quote}
  {\bf Claim}. For all $b,x,\alpha,p$ given above, we have
  $\|b-x\|^2\ge\alpha^2-2 \alpha\|p\|+1$.
\end{quote}
By this claim (proved later), \Ky{from inequality (\ref{eq:7202})}, we have:
\begin{eqnarray*}
 \|Tb - Tx\|^2  & \Ky{\ge}& \alpha^2 - 2\alpha\|p\| + 1 - \varepsilon
 \big (1 + \|x\|_A \big)^2 \\ 
& \ge & \alpha^2 - 2\alpha\|p\| + 1 - \varepsilon \big (1 + \mu_A
 \alpha\big)^2 \quad \mbox{\Ky{(since $  \|x\|_A \le \mu_A \|x\|$)} }\\
 & = & \big(1 - \varepsilon \mu_A^2\big) \alpha^2 - 2 \big( \|p\| +
 \varepsilon \mu_A\big)\alpha + (1-\varepsilon). 
 \end{eqnarray*}
The last expression can be viewed as a quadratic function with respect
to $\alpha$. We will prove this function is \Ky{positive} for all
$\alpha\in\mathbb{R}$. This is equivalent to\footnote{\Ky{Here we use the fact that a quadratic function $ax^2 + bx + c > 0$ for all $x \in \mathbb{R}$ if and only if $a > 0$ and $b^2 - 4ac < 0$.}}
\begin{eqnarray*} 
&&\big(\|p\| + \varepsilon \mu_A\big)^2 -  \big(1 - \varepsilon \mu_A^2 \big) (1-\varepsilon) \; \Ky{<} \; 0 \\
&\Leftrightarrow&  \big(\mu_A^2 +  2 \|p\|\mu_A + 1\big)\varepsilon  \; \Ky{<} \;  1 - \|p\|^2  \\
& \Leftrightarrow & \varepsilon  \; \Ky{<} \;  \frac{1 - \|p\|^2}{\mu_A^2  + 2 \|p\|\mu_A + 1} =  \frac{d^2}{\mu_A^2  + 2 \|p\|\mu_A + 1},
\end{eqnarray*}
which holds for the choice of $\varepsilon$ as in the hypothesis.  In
\Ky{conclusion}, if the event $S_\varepsilon$ occurs, then $\|Tb - Tx\|^2
> 0$ for all $x \in \mbox{\sf cone}\{A_1,\ldots,A_n\}$, i.e.  $Tx
\notin \mbox{\sf cone}\{TA_1,\ldots,TA_n\}$. Thus,
\[\mbox{\sf Prob}(Tb \notin TC) \ge \mbox{\sf Prob} (S_\varepsilon) \ge 1 - 2\Ky{(n+1)(n+2)}e^{-c(\varepsilon^2 - \varepsilon^3)k}\]
as claimed. The result follows since $\|p\|_2^2+d^2=1$ by Pythagoras' theorem, and $\Delta\le d$. 
\par\bigskip\par
{
\noindent {\it Proof of the claim \Ky{that  $\|b-x\|^2\ge\alpha^2-2 \alpha\|p\|+1$:}}\\
If $x = 0$ then the claim is
trivially true, since $\|b-x\|^2= \|b
\|^2=1=\alpha^2-2\alpha\|p\|+1$. Hence we assume $x\neq 0$.  First
consider the case $p\neq 0$. By Pythagoras' theorem, we must have $d^2
= 1 - \|p\|^2$.  We denote by $z=\frac{\|p\|}{\alpha}x$, then $\|z\| =
\|p\|$. Set $\delta = \frac{\alpha}{\|p\|}$, we have
\begin{eqnarray*}
\|b - x \|^2  & =& \|b - \delta z \|^2 \\
& =& (1 - \delta) \|b\|^2 + (\delta^2 - \delta) \|z\|^2 + \delta \|b - z\|^2 \\
& =& (1 - \delta) + (\delta^2 - \delta) \|p\|^2 + \delta \|b - z\|^2\\
& \ge& (1 - \delta) + (\delta^2 - \delta) \|p\|^2 + \delta d^2\\
& =& (1 - \delta) + (\delta^2 - \delta) \|p\|^2 + \delta (1 - \|p\|^2) \\
& =& \delta^2 \|p\|^2 - 2\delta \|p\|^2 + 1  = \alpha^2 - 2\alpha\|p\| + 1.
\end{eqnarray*}
Next, we consider the case $p = 0$. In this case we have $b^T(x) \le 0$
for all $x \in \mbox{\sf cone}\{A_1,\ldots,A_n\}$. Indeed, for an arbitrary $\delta >0$,
\[0 \le \frac{1}{\delta} (\|b-\delta x\|^2 - 1) = \frac{1}{\delta} (1 + \delta^2 \|x\|^2 - 2 \delta b^Tx - 1) = \delta \|x\|^2 - 2 b^Tx\]
which tends to $-2 b^Tx$ when $\delta \to 0^+$. Therefore
\begin{eqnarray*}
\|b - x \|^2 & =& 1 - 2b^Tx + \|x\|^2 \ge \|x \|^2 + 1 = \alpha^2 -
2\alpha\|p\| + 1,
\end{eqnarray*}
which proves the claim.}  \qed
\end{proof}

Since cone membership is the same as LP feasibility, Thm.~\ref{thm:mainthm} establishes that LFPs can be randomly projected accurately w.o.p.

\section{Preserving optimality}
\label{s:opt}
In this section we show that, if the projected dimension $k$ is large enough, $v(P)\approx v(P_T)$ w.o.p (Thm.~\ref{thm:objfunapprox}). We assume all along, and without loss of generality, that $b,c$ and the columns of $A$ have unit Euclidean norms.

The proof of Thm.~\ref{thm:objfunapprox} is divided into two main parts.
\begin{itemize}
\item In the first part, we write $v(P)\approx v(P_T)$ formally as ``given $\delta>0$ there is a random projector $T$ such that $v(P)-\delta\le v(P_T)\le v(P)$ w.o.p.'', formalize some infeasible LFPs which encode $v(P)-\delta$ and $v(P_T)$, and emphasize their relationship.
\item In the second part, we formally argue the ``overwhelming probability'' by means of an $\varepsilon>0$ (in function of $\delta$) which ensures that the probability of $v(P)-\delta\le v(P_T)$ approaches 1 fast enough (as a function of $\varepsilon$). This $\varepsilon$ refers to the projected (infeasible) LFP of the first part, but for technical reasons we cannot simply ``inherit it'' from Thm.~\ref{thm:mainthm}. Instead, from the cone of the infeasible LFP we carefully construct a new {\it pointed} cone which allows us to carry out a projected separation argument based on inner product preservation (Prop.~\ref{cor:angles}).
\end{itemize}
Our proof assumes that the feasible region of $P$ is non-empty and bounded. Specifically, we assume that a constant $\theta>0$ is given such that that there exists an optimal solution $x^*$ of $P$ (see Eq.~\eqref{orig}) satisfying
\begin{equation}
\sum\limits_{j=1}^n x^*_j<\theta. \label{eq:hypopt}
\end{equation}
For the sake of simplicity (and without loss of generality), we assume further that $\theta \ge 1$. This assumption is used to control the excessive flatness of the involved cones, which is required in the projected separation argument.


\subsection{A cone transformation operation}
Before introducing Thm.~\ref{thm:objfunapprox} and its proof, we explain how to construct a pointed cone from the cone of the LFP in such a way as to preserve a certain membership property.

Given a polyhedral cone
\[\mathcal{K}=\left\{\sum\limits_{j\le n} x_jC_j\;\bigg|\;x\in\mathbb{R}^n_+\right\}\]
in which $C_1,\ldots,C_n$ are column vectors of an $m\times n$
matrix $C$, \Ky{in other words $\mathcal{K} = \cone{C}$}. For any $u\in\mathbb{R}^m$, we
consider the following transformation $\phi_{u,\theta}$, defined by:
\[\phi_{u,\theta}(\mathcal{K}) :=\left\{\sum\limits_{j=1}^n \Ky{x_j \left(C_j-\frac{1}{\theta}u\right)\;} \bigg| \;  x \in \mathbb{R}^n_+ \right\}.\]
In particular, $\phi_{u,\theta}$ moves the origin in the
direction $u$ by a step $1/\theta$ (see Figure \ref{fig:M2}). For $\theta$ defined in Eq.~\eqref{eq:hypopt}, we also consider the following set
\[\mathcal{K}_\theta = \left\{\sum\limits_{j=1}^n \, \Ky{x_jC_j} \; \bigg| \;  x \in \mathbb{R}^n_+\land \sum\limits_{j=1}^nx_j<\theta\right\}.\]
$\mathcal{K}_\theta$ can be seen as a set truncated from $\mathcal{K}$ \Ky{(in particular, it is not a cone anymore)}.  We shall show
that $\phi_{u,\theta}$ preserves the membership of the vector $u$ in the ``truncated
cone'' $\mathcal{K}_\theta$. 
\begin{figure}
\ifmor
  \FIGURE
  {\includegraphics[width=10cm,height=7cm]{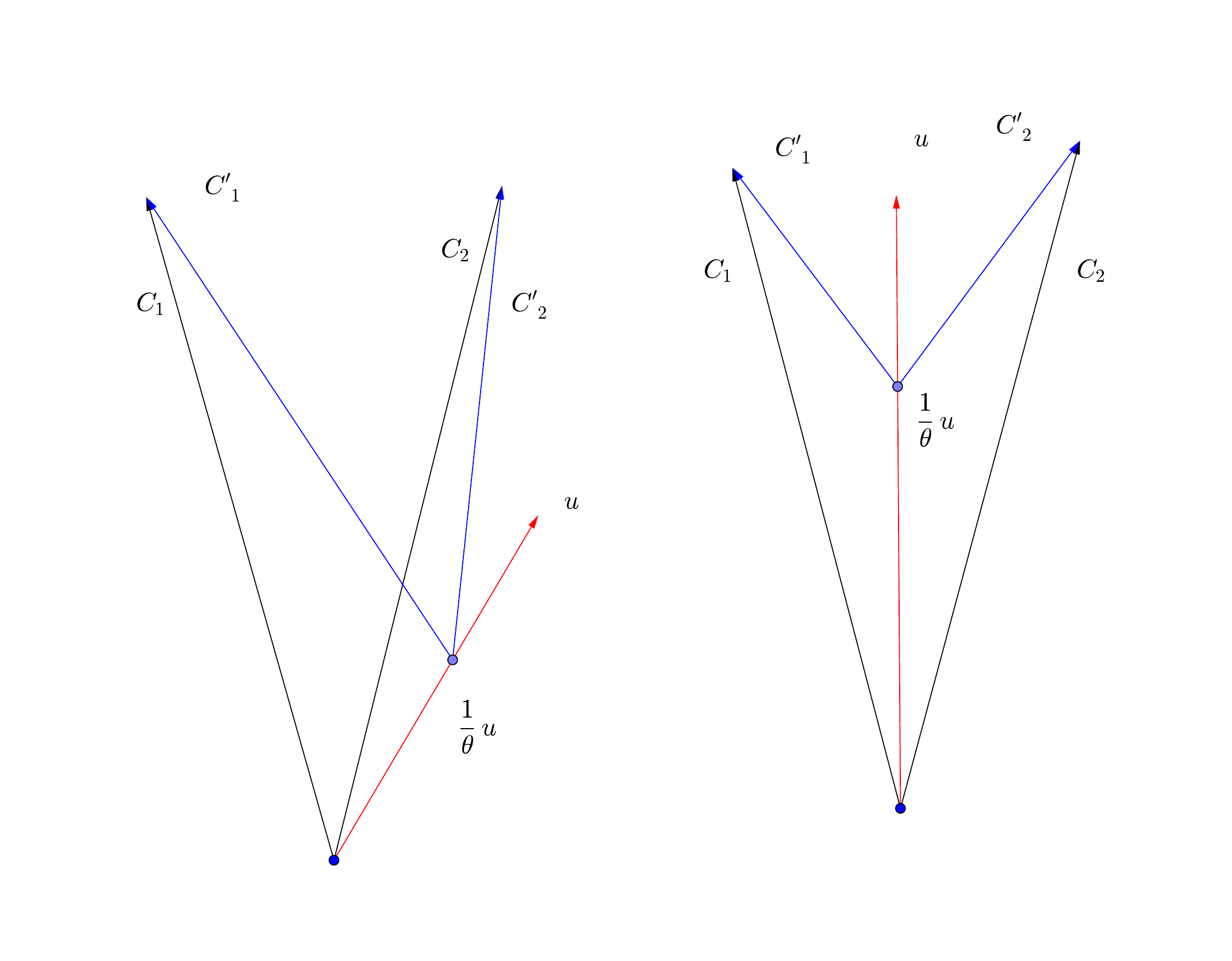}}
 {The effect of $\phi_u$ when $u$ does not belong to the cone (left) and when it does (right).\label{fig:M2}}{} 
\else
  \centering{\includegraphics[width=10cm,height=7cm]{figsec4.pdf}}
\\  \centerline{\small The effect of $\phi_u$ when $u$ does not belong to the cone (left) and when it does (right).\label{fig:M2}}
\fi      
\end{figure}

\begin{lemma}\label{lem:conictransform}
For any $u \in \mathbb{R}^m$, we have  $u \in \mathcal{K}_\theta$
if and only if $u \in \phi_{u,\theta}(\mathcal{K})$.
\end{lemma}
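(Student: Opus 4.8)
The plan is to prove both implications by a direct algebraic manipulation, exploiting that the defining relation of $\phi_{u,\theta}(\mathcal{K})$ is itself linear in $u$ and can therefore be solved for $u$. The single observation driving everything is this: a vector $y\in\mathbb{R}^n_+$ certifies $u\in\phi_{u,\theta}(\mathcal{K})$ precisely when
\[u=\sum_{j=1}^n y_j\Big(C_j-\tfrac{1}{\theta}u\Big)=\sum_{j=1}^n y_jC_j-\tfrac{t}{\theta}\,u,\qquad t:=\sum_{j=1}^n y_j\ge 0,\]
which rearranges to $\big(1+\tfrac{t}{\theta}\big)u=\sum_{j}y_jC_j$. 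Since $t\ge 0$ and $\theta>0$, the scalar $1+t/\theta$ is strictly positive, so this identity says exactly that $u$ is a nonnegative combination of the $C_j$ whose total weight I can read off explicitly. The whole argument is then just the bookkeeping of this scalar weight.

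For the reverse direction ($u\in\phi_{u,\theta}(\mathcal{K})\Rightarrow u\in\mathcal{K}_\theta$), I would take such a $y$, divide the rearranged identity by $1+t/\theta$, and set $x_j:=y_j/(1+t/\theta)\ge 0$. Then $u=\sum_j x_jC_j$ already exhibits $u\in\mathcal{K}$, and the total weight is $\sum_j x_j=t/(1+t/\theta)=t\theta/(\theta+t)$, which is strictly less than $\theta$ because $t/(\theta+t)<1$. Hence $u\in\mathcal{K}_\theta$.

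For the forward direction, I would start from a representation $u=\sum_j x_jC_j$ with $s:=\sum_j x_j<\theta$ (available by definition of $\mathcal{K}_\theta$) and search for a certificate of the form $y_j=(1+t/\theta)x_j$, matching the required total weight $t=\sum_j y_j$. Substituting collapses everything to the single scalar equation $t=(1+t/\theta)s$, whose solution is $t=s\theta/(\theta-s)$; this is well-defined and nonnegative exactly because $0\le s<\theta$, and then $y_j\ge 0$ by construction, giving $u\in\phi_{u,\theta}(\mathcal{K})$.

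The point to watch — rather than a genuine obstacle — is keeping nonnegativity and the strict inequality coordinated. The condition $s<\theta$ in the definition of $\mathcal{K}_\theta$ is precisely what guarantees $\theta-s>0$, so that the rescaling factor in the forward direction is finite and nonnegative; conversely $\theta>0$ is precisely what yields the strict inequality $t\theta/(\theta+t)<\theta$ in the reverse direction. The degenerate cases $u=0$ (equivalently $s=0$ or $t=0$) are immediate, since $0$ lies in both $\mathcal{K}_\theta$ and $\phi_{u,\theta}(\mathcal{K})$ by taking all weights zero.
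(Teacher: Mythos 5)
Your proof is correct and takes essentially the same approach as the paper's: both directions rescale the certificate coefficients by one positive scalar, and your factors coincide exactly with the paper's (your forward factor $\theta/(\theta-s)$ is the paper's $1/t$ with $t=1-\frac{1}{\theta}\sum_j x_j$, and your reverse factor $1/(1+t/\theta)$ is the paper's $1/(1+\frac{1}{\theta}\sum_i x_i)$). The only cosmetic difference is that you obtain the forward factor by solving the scalar equation $t=(1+t/\theta)s$ rather than positing the rescaling directly.
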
   
\begin{proof}  {\it Proof.}
\Ky{First of all, let denote by $t = 1-\frac{1}{\theta}\sum\limits_{j=1}^nx_j$. }

($\Rightarrow$) If $u \in \mathcal{K}_\theta$, then there
  exists $x \in \mathbb{R}^n_+$ such that
  $u=\sum\limits_{j=1}^nx_j C_j$ and
  $\sum\limits_{j=1}^nx_j<\theta$. \Ky{Then $u$
  can be written as $\sum\limits_{j=1}^n x'_j \big(C_j -\frac{1}{\theta}u \big)$ with
  $x'=\frac{1}{t} \, x$. Indeed,
  \begin{align*}
  \sum\limits_{j=1}^n x'_j \big(C_j -\frac{1}{\theta}u \big) & = \frac{1}{t}  \sum\limits_{j=1}^n x_j \big(C_j -\frac{1}{\theta}u \big) \\
  & = \frac{1}{t}  \sum\limits_{j=1}^n x_j C_j - \frac{1}{t}   \big(\sum\limits_{j=1}^n  \frac{1}{\theta} x_j \big) u \\
  & = \frac{1}{t}  u - \frac{1}{t}   \big(\sum\limits_{j=1}^n  \frac{1}{\theta} x_j \big) u \\
  & = \frac{1}{t}  \big( 1 -  \frac{1}{\theta}   \sum\limits_{j=1}^n  x_j \big)  u \\
  & = u \qquad \mbox{(by definition of $t$)}.
  \end{align*}
}
  
\Ky{Moreover}, due to the assumption that $\sum\limits_{j=1}^nx_j<\theta$, \Ky{we have} $x' \ge 0$. \Ky{It follows that $u\in\phi_{u,\theta}(\mathcal{K})$.} \\
  
($\Leftarrow$) If $u \in \phi_{u,\theta}(\mathcal{K})$, then there
  exists $x \in \mathbb{R}_+^n$ such that
  $u=\sum\limits_{j=1}^n x_j \Ky{\big(C_j -\frac{1}{\theta}u \big)}$. \Ky{It is equivalent to $\big(1+\frac{1}{\theta}\sum\limits_{j=1}^nx_j ) u = \sum\limits_{j=1}^n x_j C_j.$}
  \Ky{Thus} $u$ can also be written as
  $\sum\limits_{j=1}^n x'_j \Ky{C_j}$, where
  $x'_j=\frac{x_j}{1+\frac{1}{\theta}\sum\limits_{i=1}^nx_i}$. Note
  that $\sum\limits_{j=1}^nx'_j < \theta$ because
  \[ \sum\limits_{j=1}^nx'_j =\frac{\sum\limits_{j=1}^nx_j}{1+\frac{1}{\theta}\sum\limits_{j=1}^nx_j}< \theta, \]
  which implies that $u \in K_\theta$. \qed
\end{proof}

Note that this result is still valid when the transformation $\phi_{u,\theta}$
is only applied to a subset of columns of $C$. Given \Ky{any vector $u$ and} an index set $J\subseteq\{1,\ldots,n\}$, we define $\forall j\le n$:
\[ \Ky{C_j^{Ju}} = \left\{\begin{array}{ll}
C_j - \frac{1}{\theta}u & \quad \mbox{if } j\in J \\ C_j & \quad
\mbox{otherwise.} \end{array}\right.\]
We extend $\phi_{u,\theta}$ to
\begin{equation}
\phi_{u,\theta}^J(\mathcal{K})=\left\{\sum\limits_{j=1}^n \, \Ky{x_jC_j^{Ju}} \;\bigg| \;  x \in \mathbb{R}^n_+ \right\}
= \Ky{\cone{C_j^{Ju} \;|\; 1 \le j \le n}}, \label{eq:phiext}
\end{equation}
and define \[\mathcal{K}^J_\theta = \left\{\sum\limits_{j=1}^n\, \Ky{x_jC_j} \; \bigg| \;  x \in \mathbb{R}^n_+\land \sum\limits_{j \in J}x_j<\theta\right\}
.\]
The following corollary \Ky{can be proved} in the same way as Lemma \ref{lem:conictransform}, in which $\phi_{u,\theta}$ is replaced by $\phi_{u,\theta}^J$.
\begin{corollary}\label{cor:conictransform}
For any \Ky{vector} $u \in \mathbb{R}^m$ and \Ky{any index set} $J\subseteq\{1,\ldots,n\}$, we have
 $u \in \mathcal{K}^J_\theta$ if and only if $u \in
\phi_{u,\theta}^J(\mathcal{K})$.
\end{corollary}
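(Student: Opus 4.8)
The plan is to mirror the proof of Lemma~\ref{lem:conictransform} almost verbatim, the only change being that the shift $-\frac{1}{\theta}u$ is now applied solely to the columns indexed by $J$. Consequently the scalar $\sum_{j=1}^n x_j$ that appears throughout the lemma's proof is everywhere replaced by the partial sum $\sum_{j\in J}x_j$. The single computation underlying both directions is the identity
\[
\sum_{j=1}^n x_j C_j^{Ju} = \sum_{j=1}^n x_j C_j - \frac{1}{\theta}\Big(\sum_{j\in J}x_j\Big)u,
\]
which holds because $C_j^{Ju}=C_j$ for $j\notin J$ and $C_j^{Ju}=C_j-\frac{1}{\theta}u$ for $j\in J$, so that only the $J$-indexed terms contribute to the $u$-part. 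Everything else is bookkeeping.

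For the forward direction ($\Rightarrow$) I would take $x\in\mathbb{R}^n_+$ with $u=\sum_{j=1}^n x_j C_j$ and $\sum_{j\in J}x_j<\theta$, set $t=1-\frac{1}{\theta}\sum_{j\in J}x_j$, and define $x'=\frac{1}{t}x$. The truncation constraint $\sum_{j\in J}x_j<\theta$ guarantees $t>0$, hence $x'\ge 0$; substituting $x$ into the displayed identity gives $\sum_{j=1}^n x_j C_j^{Ju}=t\,u$, so $\sum_{j=1}^n x'_j C_j^{Ju}=u$ and therefore $u\in\phi_{u,\theta}^J(\mathcal{K})$.

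For the reverse direction ($\Leftarrow$) I would start from $u=\sum_{j=1}^n x_j C_j^{Ju}$ with $x\ge 0$, apply the same identity to rewrite this as $\big(1+\frac{1}{\theta}\sum_{j\in J}x_j\big)u=\sum_{j=1}^n x_j C_j$, and then divide by the positive scalar $s=1+\frac{1}{\theta}\sum_{j\in J}x_j$ to obtain $u=\sum_{j=1}^n x'_j C_j$ with $x'_j=x_j/s\ge 0$. The only point needing a short check is the strict inequality $\sum_{j\in J}x'_j<\theta$: writing $S=\sum_{j\in J}x_j\ge 0$, it reduces to $\frac{\theta S}{\theta+S}<\theta$, i.e.\ $0<\theta^2$, which is immediate since $\theta\ge 1$.

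I do not anticipate any real obstacle, since the statement is a direct generalization of Lemma~\ref{lem:conictransform} and the proof is structurally identical. The only place demanding care is tracking the shift per index — ensuring that the columns outside $J$ contribute nothing to the $u$-term and that the truncation is governed by the partial sum $\sum_{j\in J}x_j$ rather than the full sum — which is precisely what the displayed identity encodes.
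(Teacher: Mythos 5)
Your proof is correct and takes exactly the approach the paper intends: the paper's own ``proof'' of Corollary~\ref{cor:conictransform} is just the remark that one repeats the argument of Lemma~\ref{lem:conictransform} with $\phi_{u,\theta}$ replaced by $\phi_{u,\theta}^J$, which is precisely what you do, with the partial sum $\sum_{j\in J}x_j$ correctly playing the role of $\sum_{j=1}^n x_j$ in both the scaling factors $t$ and $s$ and in the truncation check. Your displayed identity and the verification $\frac{\theta S}{\theta+S}<\theta$ fill in the details the paper leaves implicit, and they are accurate.
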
   

\subsection{The main theorem}
Given an LFP instance $Ax=b\land x\ge 0$, where $A$ is an $m\times n$ matrix and
$T$ is a $k\times m$ random projector.  By Thm.~\ref{thm:mainthm}, we know that, 
\[\exists x\ge 0\;(Ax=b)\quad\Leftrightarrow\quad \exists x\ge 0\;(TAx=Tb)\]
w.o.p. We remark that this also holds for a $(k+h)\times m$
random projector of the form
\[\left(\begin{array}{c} I_h\quad 0 \\ T\end{array}\right),\]
where $T$ is a $k\times m$ random matrix. This allows us to claim
the feasibility equivalence w.o.p.~even when we only want to project a
subset of rows of $A$. In the following, we will use this observation to handle constraints
and objective function separately. In particular, we only project the constraints while keeping objective function unchanged.


If we add the constraint $\sum\limits_{j=1}^nx_j \le\theta$ to the problem $P_T$ (defined in Eq.~\eqref{proj}), we obtain the following:
 \begin{equation}
 P_{T,\theta}\equiv\min\left\{\transpose{c}x\;\bigg|\;TAx=Tb\land \sum\limits_{j=1}^nx_j \le \theta \land  x\in\mathbb{R}^n_+\right\}. \label{proj2}
 \end{equation}

So we come to our main theorem, which asserts that the optimal objective value of $P$ can be well-approximated by that of $P_{T,\theta}$.
\begin{theorem}
Assume $\mathcal{F}(P)$ is bounded and non-empty. Let $y^*$ be an optimal dual solution of $P$ of minimal Euclidean norm. Given $\Ky{0 < \delta \le |v(P)|}$, we have
\begin{equation}
  v(P)-\delta\le v(P_{T,\theta})\le v(P),\label{eq:objthm}
\end{equation}
with probability at least $p= 1 - 4ne^{-\mathcal{C}(\varepsilon^2-\varepsilon^3) k}$, where \Ky{ $\varepsilon=O(\frac{\delta}{\theta^2 \|y^*\|})$}.
\label{thm:objfunapprox}
\end{theorem}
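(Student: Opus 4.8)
The plan is to prove the two inequalities in Eq.~\eqref{eq:objthm} separately: the right one holds deterministically, and the left one w.o.p. For the upper bound $v(P_{T,\theta})\le v(P)$, let $x^*$ be an optimal solution of $P$; by Eq.~\eqref{eq:hypopt} it satisfies $\sum_j x^*_j<\theta$, and by linearity $TAx^*=Tb$. Hence $x^*$ is feasible for $P_{T,\theta}$ (Eq.~\eqref{proj2}), giving $v(P_{T,\theta})\le\transpose{c}x^*=v(P)$ with probability $1$, independently of $T$. All the real work is in the lower bound, which I would recast as the statement that the projected \emph{bad} system $\{x\ge 0 : TAx=Tb,\ \sum_j x_j\le\theta,\ \transpose{c}x\le v(P)-\delta\}$ is empty w.o.p. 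Its unprojected counterpart is empty by optimality of $v(P)$: any $x\ge 0$ with $Ax=b$ is feasible for $P$, so $\transpose{c}x\ge v(P)>v(P)-\delta$. Using the block-projector remark preceding Eq.~\eqref{proj2}, only the rows $Ax=b$ are projected, while the objective row $\transpose{c}x$ and the truncation $\sum_j x_j\le\theta$ are left intact.

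The key idea is to transfer, into the projected space, the separating hyperplane supplied by LP duality. Since $\cF(P)$ is bounded and non-empty, strong duality holds; let $y^*$ be the optimal dual solution of minimal norm, so $\langle y^*,A_j\rangle\le c_j$ for all $j$ and $\langle y^*,b\rangle=v(P)$. For any candidate bad $x$ we then obtain $\langle y^*,Ax\rangle=\sum_j x_j\langle y^*,A_j\rangle\le\transpose{c}x\le v(P)-\delta$, hence $\langle y^*,Ax-b\rangle\le-\delta$. Thus the entire (uncountable) family of vectors $\{Ax-b\}$ is separated from the origin by the fixed hyperplane with normal $y^*$ and margin $\delta$, and in particular each such $Ax-b$ is bounded away from $0$.

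The obstacle is that $TAx=Tb$ means $T(Ax-b)=0$, and proving $T(Ax-b)\neq 0$ \emph{uniformly} over the uncountable bad set cannot be done by a naive union bound on Cor.~\ref{cor1}. The resolution is that the separation only requires finitely many projected inner products: writing $\langle Ty^*,T(Ax-b)\rangle=\sum_j x_j\langle Ty^*,TA_j\rangle-\langle Ty^*,Tb\rangle$, I would apply Prop.~\ref{cor:angles} to the $n+1$ \emph{fixed} pairs $(y^*,A_j)$ and $(y^*,b)$. Each holds with probability $1-4e^{-\mathcal{C}(\varepsilon^2-\varepsilon^3)k}$, so by a single union bound all hold simultaneously with probability at least $1-4(n+1)e^{-\mathcal{C}(\varepsilon^2-\varepsilon^3)k}$, matching the stated $p$ up to the counting constant. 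On this event, using $x_j\ge 0$, $\sum_j x_j\le\theta$ and the unit-norm normalization, one gets $\langle Ty^*,T(Ax-b)\rangle\le\langle y^*,Ax-b\rangle+\varepsilon\|y^*\|(\sum_j x_j+1)\le-\delta+\varepsilon\|y^*\|(\theta+1)$. Choosing $\varepsilon$ small enough forces this to be strictly negative, contradicting $\langle Ty^*,T(Ax-b)\rangle=0$; hence no bad $x$ exists.

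To make the truncation $\sum_j x_j\le\theta$ and the retained objective row sit cleanly inside the conic framework, I would route the argument through the pointed-cone construction of the preceding subsection: encode the bad system as membership of the augmented vector $\bar b=\transpose{(b,v(P)-\delta)}$ in the truncated cone generated by the augmented columns $\bar A_j=\transpose{(A_j,c_j)}$, convert this via Cor.~\ref{cor:conictransform} into membership in the genuine (and now \emph{pointed}) cone $\phi^J_{\bar b,\theta}(\cdot)$, and then run the projected separation above on the pointed cone. Keeping precise track of the $\tfrac1\theta$-shifts introduced by $\phi$ and of the flatness of the shifted generators $A_j-\tfrac1\theta\bar b$ (exactly what the hypothesis $\theta\ge 1$ is there to control) is what refines the crude linear $\theta$-dependence above into the stated $\varepsilon=O(\delta/(\theta^2\|y^*\|))$. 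The main difficulty throughout is the uniformity over the uncountable bad set, and everything hinges on collapsing the separation to the $O(n)$ fixed inner products so that Prop.~\ref{cor:angles} together with one union bound suffices.
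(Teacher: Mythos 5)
Your proposal is correct, and its probabilistic engine is the same as the paper's: the optimal dual solution $y^*$ serves as a separating certificate for the infeasible ``bad'' system, Prop.~\ref{cor:angles} plus one union bound is applied to the $n+1$ \emph{fixed} pairs $(y^*,A_j)$ and $(y^*,b)$, and the truncation $\sum_j x_j\le\theta$ caps the accumulated error at $\varepsilon\|y^*\|(\theta+1)$ --- exactly the right resolution of the uncountability obstacle you identify. What differs is the scaffolding, and there your route is genuinely leaner. The paper rewrites the bad system in standard form with a slack variable, applies the pointed-cone shift $\phi^J_{\tilde{b},\theta'}$ of Cor.~\ref{cor:conictransform} to the augmented columns, checks that $\tilde{y}=(1,-y^*)$ is a Farkas certificate with margin $\delta/(\theta+1)$ for the shifted cone, transfers that certificate through the block projector $\tilde{T}$, invokes Farkas' lemma in the projected space, and finally undoes the shift; you instead derive a direct contradiction $0=\langle Ty^*,T(Ax-b)\rangle\le\langle y^*,Ax-b\rangle+\varepsilon\|y^*\|(\theta+1)\le-\delta+\varepsilon\|y^*\|(\theta+1)<0$ for any purported bad $x$, with no augmentation, no cone transformation, and no projected Farkas step (your separation is uniform in $x$ precisely because the good event involves only the fixed pairs). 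Your version even admits the larger tolerance $\varepsilon=O(\delta/(\theta\|y^*\|))$, whereas the paper's normalization constant $\eta$ attached to the augmented vectors $\tilde{b}$, $A'_j$ costs extra factors of $\theta$; since $\theta\ge 1$ and shrinking $\varepsilon$ only tightens the good event, your bound subsumes the stated $\varepsilon=O(\delta/(\theta^2\|y^*\|))$. Consequently your final paragraph, which re-routes the argument through the pointed-cone machinery to recover the $\theta^2$ dependence, is unnecessary: the direct argument already proves the theorem (and slightly more). The only blemish is cosmetic --- your union bound gives $1-4(n+1)e^{-\mathcal{C}(\varepsilon^2-\varepsilon^3)k}$ rather than the stated $1-4ne^{-\mathcal{C}(\varepsilon^2-\varepsilon^3)k}$, an imprecision the paper's own proof shares.
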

First, we will informally explain the idea of the proof. Since $v(P)$ is the optimal objective value of problem $P$, for any positive $\delta$, the problem 
$$Ax = b \land x \ge 0 \land c^\top  x \le v(P) - \delta.$$
is infeasible \Ky{(because we can not obtain a lower objective value than $v(P)$).} That problem can now be projected in such a way that it remains infeasible w.o.p. By rewriting this original problem in the standard form as
\begin{equation} \label{infeasible-problemZQW}
\begin{pmatrix} c^\top & 1 \\ A & 0
\end{pmatrix} \begin{pmatrix} x \\ s
\end{pmatrix}  = \begin{pmatrix} v(P) - \delta \\ b
\end{pmatrix}, \mbox{ where} \begin{pmatrix} x \\ s
\end{pmatrix} \ge 0,
\end{equation}
and applying a random projection of the form
$$\Ky{\left (
\begin{array}{c|ccc}
	1 & 0 & \ldots & 0  \\
	\hline
	0& && \\
	\ldots & & T & \\
	0& && 
\end{array}
\right)
,} \mbox{ where $T$ is a $k \times m$ random projector}, 
$$
we will obtain the following problem, which is supposed to be infeasible w.o.p.
\begin{equation}
\left.\begin{array}{rcl}
cx+s &=& v(P)-\delta\\
TAx &=& T b\\
s &\ge& 0 \\
x &\ge& 0 \end{array}\right\}\label{eq:Tsys}.
\end{equation}
The main idea is that, the prior information about the optimal solution $x^*$ (i.e. \Ky{the condition} $\sum\limits_{j=1}^n x^*_j \le \theta$), can now be added into this new projected problem. \Ky{This does not change its
feasibility}, but later can be used to transform the corresponding cone into the one which is easier to deal with. Therefore, w.o.p., the problem
\begin{equation}
\left.\begin{array}{rcl}
cx & \le& v(P)-\delta\\
T Ax &=& T b\\
\sum\limits_{j=1}^n x_j &\le& \theta \\
x &\ge& 0 \end{array}\right\}\label{eq:Tsys2}
\end{equation}
 is infeasible. Hence we deduce that $cx\ge v(P)-\delta$ holds
w.o.p.~for any feasible solution $x$ of the problem $P_{T,\theta}$, and that proves
the LHS of Eq.~\eqref{eq:objthm}. For the RHS, the proof is trivial since $P_T$ is a relaxation of $P$ with the same objective function. We now turn to the formal proof.

\begin{proof}  {\it Proof.}
Let $$\tilde{A}=\begin{pmatrix} c^\top & 1 \\ A & 0
\end{pmatrix}, \tilde{x} = \begin{pmatrix} x \\ s
\end{pmatrix}  \mbox{ and } \tilde{b} =\begin{pmatrix} v(P) - \delta \\ b
\end{pmatrix}$$
Furthermore, let $$ \Ky{\tilde{T} = \left (
\begin{array}{c|ccc}
1 & 0 & \ldots & 0  \\
\hline
0& && \\
\ldots & & T & \\
0& && 
\end{array}
\right)
, \mbox{ where $T$ is a $k \times m$ random projector}.}
$$
In the rest of the proof, we prove that
  $\tilde{b}\not\in\cone{\tilde{A}}$ if and only if
  $T\tilde{b}\not\in\cone{T\tilde{A}}$ w.o.p.

Let $J$ be the index set of
the first $n$ columns of $\tilde{A}$. Consider the transformation
$\phi_{\tilde{b},\theta'}^J$ as defined above, using a step
$\frac{1}{\theta'}$ instead of $\frac{1}{\theta}$, in which 
$\theta'\in(\theta,\theta+1)$. 
We define the following matrix:
\[A'=\begin{pmatrix}
\tilde{A}_1 - \frac{1}{\theta'}\tilde{b} & \cdots & \tilde{A}_n -
\frac{1}{\theta'}\tilde{b} & \tilde{A}_{n+1}
\end{pmatrix}\]

Since Eq.~\eqref{infeasible-problemZQW} is infeasible, it is easy to verify that the system:
\begin{equation}
  \left.\begin{array}{rcl}
    \tilde{A}\tilde{x} &=& \tilde{b} \\
    \sum\limits_{j=1}^n \tilde{x}_j &<& \theta' \\
    \tilde{x} &\ge& 0
  \end{array}\right\}
  \label{eq:tilde1}
\end{equation}
is also infeasible. \Ky{It is equivalent to
\[ \tilde{b}\not\in \left\{\sum\limits_{j=1}^n\, \tilde{x}_j\tilde{A}_j \; \bigg| \;  \tilde{x} \in \mathbb{R}^n_+\land \sum\limits_{j \in J}\tilde{x}_j<\theta'\right\}.
\]
}Then, by Cor.~\ref{cor:conictransform}, it follows that
$\tilde{b}\not\in\cone{A'}$. 

Let $y^*\in \mathbb{R}^m$ be an optimal dual solution of
$P$ of minimal Euclidean norm. By the strong duality theorem, we have $y^*\,A \le c$ and $y^*\,b=v(P)$. We define \Ky{
$$\tilde{y}=\begin{pmatrix} 1 \\ -y^\ast
\end{pmatrix}.$$}
We will prove that $\tilde{y}\,A' >0$ and $\tilde{y}\,
\tilde{b}<0$. 
Indeed, since  $\tilde{y}\,
\tilde{A}= \Ky{\begin{pmatrix} 1 \\ -y^\ast
\end{pmatrix}^\top \begin{pmatrix} c^\top & 1 \\ A & 0
\end{pmatrix}  =} \begin{pmatrix} c-y^{*}\,A\\ 1
\end{pmatrix}\ge 0$ and 
$\tilde{y}\,\tilde{b}=v(P)-\delta- \Ky{y^*\,b}= -\delta < 0$, then we have 
\begin{equation}
\label{farkasvec}
 \tilde{y}\, A' = \begin{pmatrix}
   c-y^*\,A+\frac{\delta}{\theta'}\\ 1
\end{pmatrix} \ge \frac{\delta}{\theta'}\mathbf{1}\ge \frac{\delta}{\theta+1}\mathbf{1} \mbox{ and  } \tilde{y}\,\tilde{b} = -\delta
\end{equation}
(where $\mathbf{1}$ is the all-one vector), which proves the claim.

Now we can apply the scalar product preservation property. By Proposition \ref{cor:angles} and the union bound, we have that
\begin{align}
\label{eq:angle}
\LL{\forall j\le n} \quad |\,((\tilde{T}\tilde{y})\,(\tilde{T}A')-\tilde{y}\,A')_j\,| & \le \varepsilon \eta \\
 \Ky{|\,(\tilde{T}\tilde{y})\,(\tilde{T}\tilde{b})-\tilde{y}\,\tilde{b}\,|} & \le \varepsilon \eta \label{eq:angle2}
\end{align}
hold with probability at least $p=1-4ne^{-\mathcal{C}(\varepsilon^2-\varepsilon^3) k}$. Here, $\eta$ is the normalization constant \Ky{(to scale vectors to unit norm)}
\[\eta = \max \bigg\{\|\tilde{y}\| \,\|\tilde{b}\|, \; \max\limits_{1\le j \le
  n} \; \|\tilde{y}\| \,\|{A_j}'\| \bigg\},\]
in which \Ky{we can easily estimate} $\eta = O(\theta \|y^*\|)$ (the proof is given at the end).
Let us now fix
$\varepsilon=\frac{\delta}{2(\theta+1)\eta}$.  \Ky{It is easy to see that 
\[\varepsilon=\frac{\delta}{2(\theta+1)\eta} = O(\frac{\delta}{\theta^2 \|y^*\|}).
\]

Then with this choice of $\varepsilon$}, by \eqref{farkasvec},
\eqref{eq:angle} and \eqref{eq:angle2}, we have, with probability at least $p$,  
\begin{eqnarray*}
  (\tilde{T}\tilde{y})\,(\tilde{T}A') &\ge& \Ky{\tilde{y}\,A' -  \varepsilon \eta\mathbf{1}  \ge \left(\frac{\delta}{\theta+1}  -  \varepsilon \eta\right)\mathbf{1}} \ge 0 \\
  (\tilde{T}\tilde{y})\,(\tilde{T}\tilde{b}) & \le & \Ky{\tilde{y} \tilde{b}+ \varepsilon \eta \le -\delta +  \varepsilon \eta} < 0,
\end{eqnarray*}
which then implies that the problem
\begin{eqnarray*}
	\tilde{T}A'\tilde{x} &=& \tilde{T}\tilde{b} \\
	\tilde{x} &\ge& 0
\end{eqnarray*}
is infeasible (by Farkas' Lemma). By definition, 
$\tilde{T}A'\tilde{x}=\tilde{T}\tilde{A}\tilde{x}-\frac{1}{\theta'}\sum\limits_{j=1}^nx_j\tilde{T}\tilde{b}$,
which implies that \Ky{the system}
\begin{equation*}
  \left.\begin{array}{rcl}
    \tilde{T}\tilde{A}\tilde{x} &=& \tilde{T}\tilde{b} \\
    \sum\limits_{j=1}^n\tilde{x}_j &<& \theta' \\
    \tilde{x} & \ge& 0
    \end{array}\right\}
\end{equation*}
is also infeasible with probability at least $p$ (the proof is similar to that of Corollary \ref{cor:conictransform}).  Therefore, with probability at least $p$, the following optimization problem:
\begin{equation*}
\inf\left\{\transpose{c}x\;\bigg|\;TAx=Tb\land \sum\limits_{j=1}^nx_j < \theta' \land  x\in\mathbb{R}^n_+\right\}. 
\end{equation*}
has its optimal value greater than $v(P)-\delta$. Since $\theta'>\theta$, it follows that with probability at least $p$, \Ky{we have} $v(P_{T,\theta}) \ge v(P)-\delta$, as claimed. \Ky{The proof is done}.

\textbf{Proof of the claim that $\eta = O(\theta \|y^*\|)$:} 
We have 
\begin{align*}
	\|\tilde{b}\|^2 & = \|b\|^2 + (v(P) - \delta)^2  \qquad \Ky{\mbox{(by the definition of $\tilde{b}$)}}\\
	& \le \Ky{\|b\|^2 + 2 (v(P))^2 + 2 \delta^2 \qquad \mbox{(using the inequality $(x-y)^2 \le 2x^2 + 2y^2$ for all $x,y$.)}}\\
	& \le \|b\|^2 + \Ky{4 (v(P))^2 \qquad \mbox{(by assumption that $|\delta| \le |v(P)|$)}} \\
	& = 1 + \Ky{4} |c^\top x^*| \\
	&\le 1 + \Ky{4} \|c\|_\infty \, \|x^*\|_1 \quad \mbox{(by H\"{o}lder inequality)} \\
	& \le 1 + \Ky{4} \theta \quad \mbox{(since $\|c\|_\infty \le \|c\|_2 = 1$  and $\sum x^*_i \le \theta$)} \\
	& \le \Ky{5} \theta \quad \mbox{(by the assumption that $ \theta \ge 1$).}
\end{align*}
Therefore, we conclude that
\[\eta = \max \bigg\{\|\tilde{y}\| \,\|\tilde{b}\|, \; \max\limits_{1\le j \le
	n} \; \|\tilde{y}\| \,\|{A_j}'\| \bigg\}  = O(\theta \,\|y^*\|)\] \qed
\end{proof}

\section{Solution retrieval}
\label{s:retrieval}
In this section we explain how to retrieve an approximation
$\tilde{x}$ of the optimal solution $x^*$ of problem $P$. Let
$\delta>0 $, by Theorem \ref{thm:objfunapprox}, we can build a vector
$x' \in \mathbb{R}^n_+$ such that $v(P)-\delta \le c\, x'\le v(P)$ and
$T Ax'=T b$ for some $k \times m$ projection matrix $T$.

\subsection{Infeasibility of projected solutions}
We first prove that \LL{$Ax'\not=b$ almost surely}, which means
that the projected problem directly gives us an approximate optimal
objective function value, but not the optimum itself. Let $0 \le \nu
\le \delta$ such that $v(P_T)=v(P)-\nu$.

Let $\tilde{A}=\left(\begin{array}{cc} c\\ A \end{array}\right)$,
$\tilde{b}=\left(\begin{array}{c} v(P)-\nu\\ b\end{array}\right)$, and
$\tilde{T}=\left(\begin{array}{c} 1 \\ T\end{array}\right)$.
We assume here that the projected solution $x'$ (s.t.~$cx'= v(P)-\nu$) is
found uniformly in the projected solution set $F'=\{x \in\mathbb{R}^n_+ \; |\;
\tilde{T}\tilde{A}x=\tilde{T}\tilde{b}\}$. We denote $F=\{x
\in\mathbb{R}^n_+ \; |\; \tilde{A}x=\tilde{b}\}$.

\PL{\begin{proposition}
      \label{prop:certificate1}
  Assume that $\cone{A}$ is full dimensional in $\mathbb{R}^m$ and that any optimal solution of $P$ has at least $m$ non-zero components. Let $x'$ be uniformly chosen in
  $F'$. Then, almost surely, $\tilde{A}x'=\tilde{b}$ does not hold.
\end{proposition}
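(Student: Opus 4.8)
The plan is to exhibit $F$ as a subset of $F'$ of strictly smaller dimension, so that a uniformly drawn $x'\in F'$ lands in $F$ with probability zero. Since $F\subseteq F'$ by linearity, and since every $x\in F'$ already satisfies the objective row $cx=v(P)-\nu$, the event $\tilde A x'=\tilde b$ is exactly the event $x'\in F$ (equivalently $Ax'=b$). Thus it suffices to prove $\dim{\operatorname{aff}(F)}<\dim{\operatorname{aff}(F')}$, because a convex set of strictly smaller dimension is Lebesgue-null for the uniform distribution on $F'$. If $\nu>0$ then no feasible point of $P$ can attain the value $v(P)-\nu<v(P)$, so $F=\varnothing$ and the statement is immediate; I therefore assume $\nu=0$, so that $F$ is the (non-empty) optimal face of $P$.

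First I locate a good base point. Fix $x_0\in\operatorname{relint}(F)$ and let $S=\operatorname{supp}(x_0)$. I claim $\operatorname{rank}(A_S)=m$. Indeed, full-dimensionality of $\cone{A}$ gives $\operatorname{rank}(A)=m$, so an optimal basic feasible solution has exactly $m$ basic variables whose columns form a rank-$m$ basis; the hypothesis that every optimal solution has at least $m$ positive components forces such a vertex to be non-degenerate, with support of size $m$. Moreover the support of any point of $F$ is contained in $\operatorname{supp}(x_0)=S$, since a relative-interior point is positive in every coordinate that is positive somewhere on $F$. Hence $A_S$ contains the $m$ independent columns of an optimal basis, and $\operatorname{rank}(A_S)=m$.

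Next I build the escaping direction. For a full-rank random projector $T$ we have $\dim{\ker{T}}=m-k$. Let $y^*$ be the minimal-norm optimal dual solution; strong duality and complementary slackness give $c_j=(y^*)^\top A_j$ for all $j\in S$, i.e. $c_S=(y^*)^\top A_S$, so any $w$ supported on $S$ satisfies $cw=\langle y^*,A_Sw\rangle$. Because $\dim{\ker{T}}+\dim{y^{*\perp}}-m=(m-k)+(m-1)-m=m-k-1\ge 1$ (using $k\le m-2$), and $\dim{V\cap W}\ge \dim{V}+\dim{W}-m$ for any subspaces, the intersection $\ker{T}\cap y^{*\perp}$ contains a nonzero vector $u$. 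Since $\operatorname{rank}(A_S)=m$, the equation $A_Sw=u$ has a solution $w\in\mathbb{R}^S$ (extended by zeros off $S$). Then $cw=\langle y^*,u\rangle=0$ and $TAw=TA_Sw=Tu=0$, while $Aw=u\neq 0$; and as $x_0>0$ on $S$ we have $x_0\pm tw\in F'$ for all small $t>0$, whereas $A(x_0+tw)=b+tu\neq b$ forces $x_0+tw\notin F$.

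Finally, $w$ is a direction of $\operatorname{aff}(F')$ but not of $\operatorname{aff}(F)$, because every direction of $\operatorname{aff}(F)$ lies in $\ker{A}$ while $Aw\neq0$; hence $\dim{\operatorname{aff}(F')}>\dim{\operatorname{aff}(F)}$, $F$ is a measure-zero subset of $F'$, and a uniform $x'\in F'$ obeys $\tilde A x'=\tilde b$ with probability $0$. The main obstacle is the rank statement $\operatorname{rank}(A_S)=m$ of the second step: this is precisely what the two standing hypotheses---full-dimensional $\cone{A}$ and the non-degeneracy requirement that optimal solutions have at least $m$ nonzero components---are designed to secure. A secondary point is the non-triviality of $\ker{T}\cap y^{*\perp}$, which follows from the dimension count but genuinely uses the gap $k\le m-2$, consistent with the regime $k\ll m$ in which the method operates.
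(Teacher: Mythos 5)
Your proof is correct, and it takes a genuinely different route from the paper's. You argue directly: the hypotheses force any optimal basic feasible solution to be nondegenerate with exactly $m$ positive components, and since its support lies inside the support $S$ of a relative-interior point $x_0$ of the optimal face $F$, you get $\operatorname{rank}(A_S)=m$; complementary slackness with the optimal dual $y^*$ then lets you pull any nonzero $u\in\ker{T}\cap (y^*)^\perp$ back to a direction $w$ supported on $S$ with $\transpose{c}w=0$, $TAw=0$ but $Aw\neq 0$, so that $x_0\pm tw\in F'\setminus F$, hence $\dim{\operatorname{aff}(F')}>\dim{\operatorname{aff}(F)}$ and $F$ is null for the uniform measure on $F'$. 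The paper instead argues by contradiction: assuming $\mbox{\sf Prob}(x'\in F)=p>0$, it shows (using the same two hypotheses) that $\tilde{b}$ lies in the relative interior of a \emph{facet} of the $(m+1)$-dimensional cone $\cone{\tilde{A}}$, picks a small segment $[-u,u]$ in $\ker{\tilde{T}}$ intersected with that facet, and then packs infinitely many pairwise disjoint sets $F_{u/N}$ into $F'$, each containing a scaled translate of $F$ and so of measure at least $\alpha p$, contradicting finiteness of the uniform measure. The hypotheses play the same role in both arguments (your rank statement $\operatorname{rank}(A_S)=m$ is essentially the primal counterpart of the paper's claim that $\tilde{b}$ sits on a facet rather than a lower-dimensional face), and both proofs share the same side conditions: $k\le m-2$ (your explicit gap, the paper's ``$\dim{\ker{\tilde{T}}}\ge 2$ w.l.o.g.'') and the implicit assumption that the uniform distribution on $F'$ is well defined. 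What your route buys is a cleaner measure-theoretic ending (a convex subset of strictly smaller affine dimension is automatically null, with no packing argument needed) and a substitution of standard LP duality for the facial-geometry claim; what the paper's route buys is that it never needs to exhibit the dimension gap explicitly, only a single escaping direction in $\ker{\tilde{T}}$, which is why its cone-geometric claim suffices.
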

\begin{proof} 
{\it Proof.} If $\nu>0$ then obviously $\tilde{A}x'=\tilde{b}$ does not hold, because otherwise, it would contradict the minimality of $v(P)$. Hence we assume in the rest of the proof that $\nu=0$, i.e, the value of the projected problem is the same than the value of the original one.

In order to aim at a contradiction, we assume that
\[\mbox{\sf Prob}(x' \in F)=\mbox{\sf p} > 0.\] For each $\epsilon \in \ker{T}$, let
\[F_\epsilon=\{x\ge 0\;|\; \tilde{A}x-\tilde{b}=\epsilon\}\cap F'.\] We will prove
that there exists $d >0$ and a family $\mathcal{V}$ of
infinitely many $\epsilon \in \ker{\tilde{T}}$ such that $\mbox{\sf Prob}(x' \in F_\epsilon)
\ge d >0$. Since $(F_\epsilon)_{\epsilon \in \mathcal{V}}$ is a family of
disjoint sets, we deduce that $\mbox{\sf Prob}\left(x'\in
\bigcup\limits_{\epsilon \in \mathcal{V}}F_v\right) \ge \sum\limits_{\epsilon \in
  \mathcal{V}}d =+\infty$, leading to a contradiction.

\begin{quote}
\textbf{Claim:} $\tilde{b}$ belongs to the relative interior of a facet of the $m+1$ dimensional cone, $\cone{\tilde{A}}$. \\
{\bf Proof of claim}. Notice first that if $\tilde{b}$ belongs to the relative interior of $\cone{\tilde{A}}$ then we can  find a feasible solution for $P$ with a smaller cost. Hence $\tilde{b}$ belongs to a face of dimension at most $m$. Assume now, to aim at a contradiction, that $\tilde{b}$ belongs to the relative interior of a face of dimension $d\le m-1$ of $\cone{\tilde{A}}$. Then, we could write $\tilde{b}$ as a positive sum of $d$ extreme rays, $\tilde{A}_j, \ j \in J$ . Hence there exists an optimal solution $x^*$ of $P$  with $d$ non-negative components. Since $d < m$ there is a  contradiction.
\end{quote}
\noindent Hence $0$ belongs to a facet of $\{\tilde{A}x-\tilde{b}\;|\; x
\ge0\}$, and since $\dim{\ker{\tilde{T}}}\ge 2$ (w.l.o.g.), then there
exists a segment $[-u,u]$ (for $\|u\|$ small enough) that is contained
in the intersection $\ker{\tilde{T}} \cap \{\tilde{A}x-\tilde{b}\;|\;
x \ge0\}$.

Let $\tilde{A}_j,\ j \in J$ be the rays of $\cone{\tilde{A}}$ that
belong to the same facet of $\cone{\tilde{A}}$ as $\tilde{b}$. There
exists $\bar{x}\ge 0$ such that $A\bar{x}=b$ and $\bar{x}_j >
0,\ \forall j \in J$ (because $\tilde{b}$ belongs to the relative
interior of this facet). Since $[-u,u]$ belongs to this facet, there
exits $\hat{x}\in \mathbb{R}^n$ such that $A\hat{x}=-u$ and such that
$\hat{x}_j =0,\ \forall j \notin J$. We can hence compute $\bar{N}>0$
large enough such that $2\hat{x}\le \bar{N} \bar{x}$. 
 
For all $N\ge \bar{N}$ and for all $x\in F$, we denote
$x'_N=\frac{\bar{x}+x}{2}-\frac{1}{N}\hat{x}$. Then we have
$\tilde{A}x'_N=\tilde{b}-\frac{1}{N}\tilde{A}\hat{x}=\tilde{b}+\frac{u}{N}$
and
$x'_N=\frac{x}{2}+(\frac{\bar{x}}{2}-\frac{\hat{x}}{N})\ge0$. Therefore,
\[\frac{\bar{x}+F}{2}-\frac{1}{N}\hat{x} \subseteq F_{\frac{u}{N}}\]
which implies that, for all $N\ge\bar{N}$, 
\[\mbox{\sf Prob}(x' \in F_{\frac{u}{N}})=\mu(F_{\frac{u}{N}})\ge \mu(\frac{\bar{x}+F}{2})\ge \alpha\mu(F)=\alpha\mbox{\sf p}>0\]
for some constant $\alpha>0$, where $\mu$ is a uniform measure on $F'$. \qed
\end{proof}
}

\subsection{Approximate solution retrieval}

\Ky{

Let us consider $y^*$ to be an optimal solution of the following dual problem:
\begin{equation} 
D\equiv\max\; \{b^\top y\;|\;y^\top A\le c\land y\in\mathbb{R}^m\} \label{dual}
\end{equation}		
	and let $y_T$ be an optimal solution of the dual of the projected problem:	
\begin{equation}
D_T\equiv\max\; \{(Tb)^\top \,y\;|\;y^\top\,TA\le c\land
y\in\mathbb{R}^k\}. \label{projdual}
\end{equation}
Let define $y_{\mbox{\sf\scriptsize prox}} = T^\top y_T$. It is easy
to see that $y_{\mbox{\sf\scriptsize prox}}$ is also a feasible
solution for the dual problem $D$ in (\ref{dual}). 

In this section we will assume that
the vector $b \in \mathbb{R}^m$ belongs to the relative interior of the normal cone at some vertex of the dual polyhedron. Under this assumption, the dual solution $y^*$ is uniquely determined.

Let $C_t(y^*)$ be the tangent cone of the dual polyhedron $\mathcal{F}(D) \equiv \{y\in\mathbb{R}^m|\;y^\top A\le c\} $ at $y^*$, which is defined as
\[
C_t(y^*) = \mbox{ \sf closure} \left(\big\{d \; : \exists \lambda > 0 \mbox{ such that } x + \lambda d \in  \mathcal{F}(D) \big\}\right)
\]

In other words, $C_t(y^*)$  is the closure of the set of all feasible directions of the dual polyhedron $\mathcal{F}(D) $ at $y^*$.  Moreover, it is a convex cone generated by a  set of vectors $v^i=y^i-y^*$ where $y^i \mbox{ are the neighboring vertices of } y^*$ for $ i\le p$. Notice that by the previous hypothesis, we have:
\[\transpose{b}v^i<0 \qquad \mbox{ for all } i \le p.
\]
For each $1 \le i  \le p$, let $\alpha_i$ denote the angle between the vectors $-b$ and $v^i$. Let denote by 
\[ \alpha^* \in  \argmin\limits_{\alpha_i, \ldots, \alpha_p} \; \cos(\alpha_i)
\]

We first prove the following lemma, which states that  $y_{\mbox{\sf\scriptsize prox}}$ is approximately close to $y^*$.
	
\begin{lemma}\label{lem:approxdual}
	For any $\varepsilon >0$, there is a constant $\mathcal{C}$ such that:
\begin{equation}\label{eq:2}
\|y^*-y_{\mbox{\sf\scriptsize prox}}\|_2 \le \frac{ \mathcal{C}\theta^2\varepsilon}{\cos(\alpha^*)\|b\|_2} \|y^*\|_2
\end{equation}	
	with probability at least $p= 1 - 4ne^{-\mathcal{C}(\varepsilon^2-\varepsilon^3) k}$
\end{lemma}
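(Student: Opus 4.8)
The plan is to convert the near-optimality of $y_{\mbox{\sf\scriptsize prox}}$ as a dual-feasible point of $D$ into a distance estimate, using the tangent-cone geometry encoded by the $v^i$ and $\alpha^*$.

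First I would isolate the two facts I need about $y_{\mbox{\sf\scriptsize prox}}=\transpose{T}y_T$. Feasibility for $D$ is immediate, since $\transpose{y_{\mbox{\sf\scriptsize prox}}}A=\transpose{y_T}(TA)\le c$ because $y_T$ is feasible for $D_T$; hence by weak duality $\transpose{b}y_{\mbox{\sf\scriptsize prox}}\le v(D)=v(P)=\transpose{b}y^*$. For the matching lower bound I would use strong duality on the projected pair, $\transpose{b}y_{\mbox{\sf\scriptsize prox}}=\transpose{(Tb)}y_T=v(D_T)=v(P_T)$, together with the objective-value approximation of Thm.~\ref{thm:objfunapprox}, which gives $v(P_T)\ge v(P)-\delta$ w.o.p. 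Combining the two bounds,
\[0\le \transpose{b}(y^*-y_{\mbox{\sf\scriptsize prox}})\le \delta,\]
where $\delta=O(\theta^2\|y^*\|\varepsilon)$ by the choice of $\varepsilon$ fixed in Thm.~\ref{thm:objfunapprox}.

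Next I would turn this objective gap into a distance. Because $\mathcal{F}(D)$ is polyhedral and $y^*$ is its (unique) optimal vertex, the entire feasible region lies in the shifted tangent cone, $\mathcal{F}(D)\subseteq y^*+C_t(y^*)$; in particular $y_{\mbox{\sf\scriptsize prox}}-y^*\in C_t(y^*)$, so I can write $y_{\mbox{\sf\scriptsize prox}}-y^*=\sum_{i\le p}\beta_i v^i$ with every $\beta_i\ge 0$. Using $\transpose{b}v^i=-\|b\|\,\|v^i\|\cos(\alpha_i)$ and $\cos(\alpha_i)\ge\cos(\alpha^*)>0$ for all $i$, I obtain
\[\transpose{b}(y^*-y_{\mbox{\sf\scriptsize prox}})=\sum_{i\le p}\beta_i\|b\|\,\|v^i\|\cos(\alpha_i)\ge\cos(\alpha^*)\,\|b\|\sum_{i\le p}\beta_i\|v^i\|.\]
The triangle inequality gives $\sum_{i\le p}\beta_i\|v^i\|\ge\big\|\sum_{i\le p}\beta_i v^i\big\|=\|y^*-y_{\mbox{\sf\scriptsize prox}}\|$, so that $\transpose{b}(y^*-y_{\mbox{\sf\scriptsize prox}})\ge\cos(\alpha^*)\,\|b\|\,\|y^*-y_{\mbox{\sf\scriptsize prox}}\|$. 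Dividing through and inserting the upper bound $\delta$ yields
\[\|y^*-y_{\mbox{\sf\scriptsize prox}}\|\le\frac{\delta}{\cos(\alpha^*)\|b\|}=\frac{\mathcal{C}\theta^2\varepsilon}{\cos(\alpha^*)\|b\|}\|y^*\|,\]
which is exactly Eq.~\eqref{eq:2}; the probability $p$ is inherited unchanged from Thm.~\ref{thm:objfunapprox}, as that is the only probabilistic ingredient.

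The hard part, where I would concentrate, is the lower bound $\transpose{b}y_{\mbox{\sf\scriptsize prox}}\ge v(P)-\delta$: the natural route passes through $v(P_T)=v(D_T)$ and Thm.~\ref{thm:objfunapprox}, but one must reconcile the fact that the theorem actually bounds the value of the truncated program $P_{T,\theta}$ (carrying the constraint $\sum_j x_j\le\theta$) rather than $P_T$ itself, so some care is needed to ensure the invoked value is the correct one. By contrast the geometric step is routine once $\mathcal{F}(D)\subseteq y^*+C_t(y^*)$ is in hand, which holds because $y^*$ is a vertex; and the uniqueness of $y^*$ (guaranteed by $b$ lying in the relative interior of the normal cone) is precisely what forces $\transpose{b}v^i<0$ for every neighbouring direction, hence $\cos(\alpha^*)>0$ and a finite right-hand side in Eq.~\eqref{eq:2}.
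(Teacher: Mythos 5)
Your proof is correct and essentially identical to the paper's: dual feasibility of $y_{\mbox{\sf\scriptsize prox}}$, near-optimality via Thm.~\ref{thm:objfunapprox}, the tangent-cone decomposition $y_{\mbox{\sf\scriptsize prox}}-y^*=\sum_i\beta_i v^i$, and the $\cos(\alpha^*)$ bound --- the paper merely packages your final inequality chain ($\cos(\alpha_i)\ge\cos(\alpha^*)$ plus the triangle inequality) as a small continuous knapsack LP solved greedily, which is the same computation. The subtlety you flag (that Thm.~\ref{thm:objfunapprox} bounds $v(P_{T,\theta})$, whereas $y_T$ is the dual optimum of the untruncated $P_T$, and $v(P_T)\le v(P_{T,\theta})$ goes the wrong way) is genuine but is present, and left unaddressed, in the paper's own proof as well, which simply cites the theorem to assert $\transpose{b}y_{\mbox{\sf\scriptsize prox}}\ge\transpose{b}y^*-\mathcal{C}\theta^2\varepsilon\|y^*\|_2$.
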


\begin{proof}  {\it Proof.}
		By definition, 
		$y_{\mbox{\sf\scriptsize prox}}$ is also a feasible
		solution for the dual problem $D$. Furthermore, by Theorem \ref{thm:objfunapprox},  there is a constant $\mathcal{C}$ such that:
		\begin{equation}\label{eq:1}
		\transpose{b}y_{\mbox{\sf\scriptsize prox}} \ge \transpose{b}y^* -   \mathcal{C}\theta^2 \varepsilon\|y^*\|_2
		\end{equation}
		with probability at least $p= 1 - 4ne^{-\mathcal{C}(\varepsilon^2-\varepsilon^3) k}$.
		
		Since $y_{\mbox{\sf\scriptsize prox}}-y^*$ belongs to the tangent cone $C_t(y^*)$, there exists non-negative scalars $\lambda_i$ (for $i\le p$) such that $y_{\mbox{\sf\scriptsize prox}}-y^*=\sum\limits_{i=1}^{p}\lambda_iv^i$. Hence
	\[ \|y^*-y_{\mbox{\sf\scriptsize prox}}\|_2 = \|\sum\limits_{i=1}^{p}\lambda_iv^i\|_2 \le \sum\limits_{i=1}^{p}\lambda_i\|v^i\|_2 . \]
		By equation \eqref{eq:1}, we have also
		$$ \mathcal{C}\theta^2 \varepsilon\|y^*\|_2  \ge \transpose{b}(y^*-y_{\mbox{\sf\scriptsize prox}}) = \sum\limits_{i=1}^{p}\lambda_i(-\transpose{b}v^i)\quad \mbox{(we recall that $-\transpose{b}v^i >0$ for all $i$) }. $$
		
		Let us consider the following LP:
		
		\begin{equation}
		\left.\begin{array}{llll}
		\max \ & \sum\limits_{i=1}^{p}\lambda_i\|v^i\|_2  && \\
		&\sum\limits_{i=1}^{p}\lambda_i(-\transpose{b}v^i)\le  \mathcal{C}\theta^2 \varepsilon\|y^*\|_2 \\
		&    \lambda \ge  0.
		\end{array}\right\} \label{eq:auxlp}
		\end{equation}
		The LP above is a simple continuous knapsack problem whose solution can be computed easily by a greedy algorithm:
		let $j$ be such that $\frac{\|v^j\|_2}{-\transpose{b}v^j}\ge \frac{\|v^i\|_2}{-\transpose{b}v^i} $ for all $i\in \{1, \ldots, p\}$, then 
		$$\frac{\|v^j\|_2}{-\transpose{b}v^j} \mathcal{C}\theta^2 \varepsilon\|y^*\|_2= \frac{1}{\cos(\alpha^*)\|b\|_2} \mathcal{C}\theta^2 \varepsilon\|y^*\|_2$$ is the optimal value of \eqref{eq:auxlp}. The lemma is proved. \qed
\end{proof}

}

We consider the following algorithm which retrieves an approximate
solution for the original LP from an optimal basis of the projected
problem.
\begin{algorithm}[!ht]   
  \caption{Retrieving an approximate solution of $P$}
  \label{alg2}     
  \begin{algorithmic}
    \State Let $y_T$ be the associated basic dual solution of the projected dual problem $(D_T)$.
    \State Define $y_{\mbox{\sf\scriptsize prox}} = T^\top y_T$
    \For {all $1 \le j \le n$}\\
	 {\qquad \PL{$z_j := \frac{c_j -  A_j^\top y_{\mbox{\sf\scriptsize prox}}}{\|A_j\|_2}  $}
         } \EndFor
	 \State Let $\mathcal{B}$ be the set of indices $j$ corresponding to the $m$ smallest values of $z_j$. \\
	 \Return $x: = A_{\mathcal{B}}^{-1} b$.
  \end{algorithmic}
\end{algorithm}

\PL{Notice that, for all $1\le j \le n$, $z_j := \frac{c_j -  A_j^\top y_{\mbox{\sf\scriptsize prox}}}{\|A_j\|_2}  $ is the distance between $ y_{\mbox{\sf\scriptsize prox}}$ and the hyperplane defined by $A_j^\top y = c_j$. Hence, Algorithm \ref{alg2} searches for the $m$ facets of the dual polyhedron that are the closest to $ y_{\mbox{\sf\scriptsize prox}}$ and return the corresponding basis. \\
	
	Let $\mathcal{B}^*$ be the optimal basis.  We consider the shortest distance from $y^*$ to any hyperplane $A_j^\top y =c_j$ for $j \notin \mathcal{B}^*$:
	$$d^*= \min\limits_{j \notin \mathcal{B}^*}\frac{c_j -  A_j^\top y^*}{\|A_j\|_2}$$	
	       }

\begin{proposition}
\label{propPL}
Assume that the LP problem $P$ satisfies the following two assumptions:
\begin{itemize}
	\item[(a)]  there is no degenerated vertex in the dual polyhedron.
	\item[(b)]  the vector $b \in \mathbb{R}^m$ belongs to the relative interior of the normal cone at some vertex of the dual polyhedron.
\end{itemize}
	\label{prop:solretrieve} 
	\PL{If $$\frac{\mathcal{C}\theta^2 \varepsilon}{\cos(\alpha^*)\|b\|_2} \|y^*\|_2 < \frac{d^*}{2},$$
		where $\mathcal{C}$ is the universal constant in Lemma  \ref{lem:approxdual}, 
then with probability at least $p= 1 - 4ne^{-\mathcal{C}(\varepsilon^2-\varepsilon^3) k}$,  the Algorithm \ref{alg2} returns an optimal basis solution.}
\end{proposition}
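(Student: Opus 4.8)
The plan is to peel off the probabilistic content immediately by conditioning on the high-probability event of Lemma~\ref{lem:approxdual}, after which everything reduces to a purely deterministic separation argument. On that event we have the quantitative bound $\|y^*-y_{\mbox{\sf\scriptsize prox}}\|_2 \le \frac{\mathcal{C}\theta^2\varepsilon}{\cos(\alpha^*)\|b\|_2}\|y^*\|_2$, and the standing hypothesis of the proposition then gives $\|y^*-y_{\mbox{\sf\scriptsize prox}}\|_2 < \tfrac{d^*}{2}$. The whole task is to show that this proximity forces the index set $\mathcal{B}$ selected by Algorithm~\ref{alg2} to coincide with the optimal basis $\mathcal{B}^*$; since the returned point is $x=A_{\mathcal{B}}^{-1}b$, equality $\mathcal{B}=\mathcal{B}^*$ immediately yields the optimal basic solution, and the stated probability $p$ is inherited verbatim from Lemma~\ref{lem:approxdual}.

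The first step is to fix the geometric meaning of the $z_j$ and set up $\mathcal{B}^*$ using the two assumptions. Since $y_{\mbox{\sf\scriptsize prox}}=T^\top y_T$ is feasible for $D$ (as already noted in the text), we have $A_j^\top y_{\mbox{\sf\scriptsize prox}}\le c_j$, so $z_j=\frac{c_j-A_j^\top y_{\mbox{\sf\scriptsize prox}}}{\|A_j\|_2}\ge 0$ is precisely the Euclidean distance from $y_{\mbox{\sf\scriptsize prox}}$ to the hyperplane $H_j=\{y\;|\;A_j^\top y=c_j\}$. Assumption~(b) makes the optimal dual vertex $y^*$ unique, and assumption~(a) (no degenerate dual vertex) guarantees that $y^*$ lies on exactly $m$ facets; these $m$ active indices are, by definition, the optimal basis $\mathcal{B}^*$, so $|\mathcal{B}^*|=m$ and $A_j^\top y^*=c_j$ (distance $0$) for $j\in\mathcal{B}^*$. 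For $j\notin\mathcal{B}^*$ the distance from $y^*$ to $H_j$ is at least $d^*$, by the very definition of $d^*$.

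The key estimate is that each map $y\mapsto \frac{c_j-A_j^\top y}{\|A_j\|_2}$ is $1$-Lipschitz (its gradient $-A_j/\|A_j\|_2$ has unit norm). Hence, by Cauchy--Schwarz,
\[
\left|\,z_j-\frac{c_j-A_j^\top y^*}{\|A_j\|_2}\,\right|=\frac{|A_j^\top(y^*-y_{\mbox{\sf\scriptsize prox}})|}{\|A_j\|_2}\le\|y^*-y_{\mbox{\sf\scriptsize prox}}\|_2<\frac{d^*}{2}.
\]
Applying this to the two groups of indices: for $j\in\mathcal{B}^*$ the reference distance is $0$, so $z_j<\tfrac{d^*}{2}$; for $j\notin\mathcal{B}^*$ the reference distance is at least $d^*$, so $z_j>d^*-\tfrac{d^*}{2}=\tfrac{d^*}{2}$. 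Thus the $m$ indices realizing the smallest values of $z_j$ are exactly those of $\mathcal{B}^*$, giving $\mathcal{B}=\mathcal{B}^*$ and hence the optimal basic solution, on the event of Lemma~\ref{lem:approxdual}.

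The main obstacle is not analytic but structural: the argument hinges on $|\mathcal{B}^*|=m$ and on $\mathcal{B}^*$ being unambiguously determined, which is exactly where assumptions~(a) and~(b) are indispensable. Without non-degeneracy the optimal dual vertex could be incident to more than $m$ hyperplanes, all at distance $0$ from $y^*$, and the ``$m$ smallest $z_j$'' selection would no longer single out a well-defined basis; without~(b) the vertex $y^*$ (and the constant $d^*$) would not even be uniquely defined. The only delicate bookkeeping point is to keep the inequalities aligned so that the threshold $\tfrac{d^*}{2}$ strictly separates $\mathcal{B}^*$ from its complement, which the strict hypothesis $\frac{\mathcal{C}\theta^2\varepsilon}{\cos(\alpha^*)\|b\|_2}\|y^*\|_2<\tfrac{d^*}{2}$ secures.
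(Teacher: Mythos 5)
Your proposal is correct and follows essentially the same route as the paper's own proof: condition on the event of Lemma~\ref{lem:approxdual}, use the hypothesis to get $\|y^*-y_{\mbox{\sf\scriptsize prox}}\|_2<\tfrac{d^*}{2}$, and then show the values $z_j$ are separated by the threshold $\tfrac{d^*}{2}$, so that the $m$ smallest ones are exactly the indices of $\mathcal{B}^*$. Your Cauchy--Schwarz (1-Lipschitz) estimate is just a more explicit rendering of the paper's triangle-inequality step, and your added bookkeeping (that non-degeneracy gives $|\mathcal{B}^*|=m$ and $d^*>0$, and that (b) pins down $y^*$ uniquely) is a welcome clarification rather than a deviation.
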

\begin{proof}  {\it Proof.}
	\PL{
By Lemma \ref{lem:approxdual}, we have that with probability at least $p= 1 - 4ne^{-\mathcal{C}(\varepsilon^2-\varepsilon^3) k}$,
 $$\|y^*-y_{\mbox{\sf\scriptsize prox}}\|_2 \le \frac{ \mathcal{C}\theta^2\varepsilon}{\cos(\alpha^*)\|b\|_2} \|y^*\|_2$$
 
  Let $\mathcal{B}^*$ be the optimal basis.  Since $\|y^*-y_{\mbox{\sf\scriptsize prox}}\|_2 < \frac{d^*}{2}$, We deduce that for all $j \in \mathcal{B}^*$, $z_j \le \|y^*-y_{\mbox{\sf\scriptsize prox}}\|_2 < \frac{d^*}{2}$.\\

  Now, let us consider $j \notin \mathcal{B}^*$. We have $z_j\ge \frac{d}{2}$, otherwise $y^*$ would be at a distance less than $d^*$ from $A_j^\top y =c_j$. Since $y^*$ is non-degenerated we have $d^*>0$. This ends the proof.
} \qed
\end{proof}

Note that both assumptions (a)-(b) in Prop.~\ref{propPL} hold almost surely for random instances.

\LL{
\section{Computational complexity}
\label{s:complexity}
The main aim of this paper is that of proving that random projections can be applied to the given LP $P$ with some probabilistic bounds on feasibility and optimality errors. The projected LP $P_T$ can be solved by any method, e.g.~simplex or interior point. Formally, we envisage the following the solution methodology:
\begin{quote}
\begin{enumerate}
\item sample a random projection matrix $T$;
\item perform the multiplication $T\,(A,b)$;
\item solve $P_T$ (Eq.~\eqref{proj});
\item retrieve a solution for $P$,
\end{enumerate}
\end{quote}
where $(A,b)$ is the $m\times (n+1)$ matrix consisting of $A$ with the column $b$ appended.

A very coarse computational complexity estimation is as follows: we assume computing each component of $T$ takes $O(1)$, so computing $T$ is $O(km)$. The best practical algorithm for serial matrix multiplication is only very slightly better than the naive algorithm, which takes $O(kmn)=O(mn\log n)$, but more efficient parallel and distributed algorithms exist. For solution retrieval, Alg.~\ref{alg2} runs in time $O(km + mn + n\log n + m^2)=O(n(m+\log n))$. The complexity $O(mn\log n)$ of matrix multiplication therefore dominates the complexity of sampling.

The last step, solution retrieval, is essentially dominated by taking the inverse of the $m\times m$ matrix $A_{\mathcal{B}}$ in Alg.~\ref{alg2}, which we can assume to have complexity $O(m^3)$.

We focus our discussion on the most computationally costly step, i.e.~that of solving the projected LP $P_T$. Exact polynomial-time methods for LP, such as the ellipsoid method or the interior point method, have complexity estimates ranging from $O(n^4 L)$ to $O(\frac{n^3}{\log n} L)$, where $L = \sum_{i=0}^m \sum_{j=0}^n \lceil\log(|a_{ij}|+1)+1\rceil$, $a_{i0} = b_i$ for all $i\le m$, and $a_{0j}=c_j$ for all $j\le n$ \cite{wrightIPM}.

Obviously, these LP complexity bounds are impacted by replacing the number $m$ of rows in $P$ by the corresponding number $k=O(\ln n)$ in $P_T$. Also note that, since $m\le n$, the complexity of solving an LP always exceeds (asymptotically) the complexity of the other steps. So the overall worst-case asymptotic complexity of our solution methodology does not change with respect to solving the original LP. On the other hand, $m$ appears implicitly as part of $L$. If we assume we can write $L$ as $mL'$ for some $L'$, then the complexity goes from $O(\frac{n^3}{\log n} mL')$ to $O(\frac{n^3}{\log n} (\ln n) L')=O(n^3 L')$.

The simplex method has exponential time complexity in the worst case. On the other hand, its average complexity is $O(m n^4)$ \cite[Eq.~(0.5.15)]{borgwardt} in terms of the number of pivot steps, each taking $O(m^2 \tilde{L})$ in a naive implementation \cite{pan}, where $\tilde{L}$ represents a factor due to the encoding length (assumed multiplicative). This yields an overall average complexity bound $O(m^3 n^4 \tilde{L})$. Replacing $m$ by $O(\ln n)$ yields an improvement $O(n^4(\ln n)^3\tilde{L})$.
}

\LL{
\section{Computational results}
\label{s:compres}
A sizable majority of works on the applications of the JLL are theoretical in nature (with some exceptions, e.g.~\cite{venkatasubramanian,quantreg2}). In this section we provide some empirical evidence that our ideas show a rather solid promise of practical applicability.

We started our empirical study by considering the {\sc NetLib} public LP instance library \cite{netlib_lp}, but it turns out that its instances are too small and sparse to yield any CPU improvement. We therefore decided to generate and test a set of random LP instances in standard form. Our test set consists of 360 infeasible LPs and 360 feasible LPs. We considered pairs $(m,n)$ as shown in Table \ref{t:1}.
\begin{table}[!ht]
  \begin{center}
  {\small
    \begin{tabular}{l|rrr}
      & \multicolumn{3}{c}{$m$} \\
      & 500 & 1000 & 1500 \\ \hline
   & 600 & 1200 & 1800 \\
  $n$ & 700 & 1400 & 2100 \\
   & 800 & 1600 & 2400 
    \end{tabular}
  }
  \end{center}    
  \caption{Instance sizes.}
  \label{t:1}
\end{table}
For each $(m,n)$ we test constraint matrix densities in $\mbox{\sf dens}\in\{0.1,0.3,0.5,0.7\}$. For each triplet $(m,n,\mbox{\sf dens})$ we generate 10 instances where each component of the constraint matrix $A$ is sampled from a uniform distribution on $[0,1]$. The objective function vector is always $c=\mathbf{1}$. Infeasible instances are generated using Farkas' lemma: we sample a dual solution vector $y$ such that $yA\ge 0$ and then choose $b$ such that $by<0$. Feasible instances are generated by sampling a primal solution vector $x$ and letting $b=Ax$.

We employ Achlioptas random projectors in order to decrease the density of the projected constraint matrix. One of the foremost difficulties in using random projections in practice is that the theory behind them gives no hint as regards the ``universal constants'', e.g.~$\mathcal{C}$ and the constant implicit in the definition of $k$ as $O(\frac{1}{\varepsilon^2} \ln n)$. In theory, one should be able to work out appropriate values of $\varepsilon$ and of the number $\sigma$ of samplings of the random projector $T$ for the problems at hand. In practice, following the theory will yield such small $\varepsilon$ and large $\sigma$ values that the smallest LPs where our methodology becomes efficient will be expected to have billions of rows, defying all computation on modest hardware such as today's laptops. In fact, we are defending the point of view that random projections are useful in day-to-day work involving large but not necessarily huge LPs and common hardware platforms. For such LPs, a lot of guesswork and trial-and-error is needed. In our computational results we use $k=\frac{1.8}{\varepsilon^2}\ln n$ after an indication found in \cite{venkatasubramanian}, $\varepsilon=0.2$ after testing some values between 0.1 and 0.3, and $\sigma=1$ again after some testing. The choice $\sigma=1$ implies that, occasionally, a few pairwise distances might fall outside their bounds; but enforcing {\it every} pairwise distance to satisfy the JLL requires excessive amounts of samplings of $T$. Besides, concentration of measure ensures that very few pairwise distances will be projected wrong w.o.p.

All results are obtained using a Julia \cite{julia} JuMP \cite{jump} script calling the CPLEX \cite{cplex126} barrier solver (without crossover) on four virtual cores of a dual core Intel i7-7500U CPU at 2.70GHz with 16GB RAM (we remark that Julia is a just-in-time compiled language, so aside from a small lag to initially compile the script, CPU times should be similar to compiled rather than interpreted programs). The CPLEX barrier solver is, in our opinion, the solver of choice when solving very large and possibly dense LPs --- our preliminary tests with the simplex method showed repeated failures due to excessive resource usage (both CPU and RAM), and high standard deviations in evaluating the computational advantage between original and projected problems. Eliminating the crossover phase is a choice we made after some experimentation with these instances. Some preliminary results with very large quantile regression problems show that this choice may need to be re-evaluated when solving problems with different structures.

\subsection{Infeasible instances}
We benchmark infeasible instances on CPU time and accuracy. The latter is expressed in terms of mismatches: i.e., an infeasible original LP that is mapped into a feasible projected LP (recall that the converse can never happen by linearity). The results are shown in Table \ref{t:2}. Each line is obtained as an average over the 10 instances with same $m,n,\mbox{\sf dens}$. 
\begin{table}[!ht]
  \begin{center}
    {\footnotesize
      \begin{tabular}{rrr|rrrr}
        $m$ & $n$ & \textsf{dens} & $k$ & \textsf{orgCPU} & \textsf{prjCPU} & \textsf{acc} \\ \hline      
        500 & 600 & 0.1 & 289 & \textbf{2.40} & 2.66 & 0.0 \\
        500 & 600 & 0.3 & 289 & \textbf{2.15} & 2.80 & 0.0 \\
        500 & 600 & 0.5 & 289 & \textbf{2.48} & 2.95 & 0.0 \\
        500 & 600 & 0.7 & 289 & \textbf{2.91} & 3.12 & 0.0 \\
        500 & 700 & 0.1 & 296 & \textbf{2.46} & 2.99 & 0.0 \\
        500 & 700 & 0.3 & 296 & \textbf{2.24} & 2.93 & 0.0 \\
        500 & 700 & 0.5 & 296 & \textbf{2.72} & 3.34 & 0.0 \\
        500 & 700 & 0.7 & 296 & 3.49 & \textbf{3.38} & 0.0 \\
        500 & 800 & 0.1 & 302 & \textbf{2.01} & 3.11 & 0.0 \\
        500 & 800 & 0.3 & 302 & \textbf{2.35} & 3.17 & 0.0 \\
        500 & 800 & 0.5 & 302 & \textbf{2.95} & 3.58 & 0.0 \\
        500 & 800 & 0.7 & 302 & \textbf{3.60} & 3.95 & 0.0 \\
        1000 & 1200 & 0.1 & 321 & 5.47 & \textbf{4.50} & 0.0 \\
        1000 & 1200 & 0.3 & 321 & 6.92 & \textbf{5.76} & 0.0 \\
        1000 & 1200 & 0.5 & 321 & 9.54 & \textbf{6.87} & 0.0 \\
        1000 & 1200 & 0.7 & 321 & 13.75 & \textbf{7.79} & 0.0 \\
        1000 & 1400 & 0.1 & 327 & \textbf{5.34} & 5.40 & 0.0 \\
        1000 & 1400 & 0.3 & 327 & 7.89 & \textbf{6.48} & 0.0 \\
        1000 & 1400 & 0.5 & 327 & 12.02 & \textbf{8.47} & 0.0 \\
        1000 & 1400 & 0.7 & 327 & 20.93 & \textbf{9.73} & 0.0 \\
        1000 & 1600 & 0.1 & 333 & \textbf{5.64} & 6.29 & 0.0 \\
        1000 & 1600 & 0.3 & 333 & 8.26 & \textbf{8.23} & 0.0 \\
        1000 & 1600 & 0.5 & 333 & 13.20 & \textbf{10.15} & 0.0 \\
        1000 & 1600 & 0.7 & 333 & 20.26 & \textbf{13.34} & 0.0 \\
        1500 & 1800 & 0.1 & 339 & \textbf{7.40} & 8.04 & 0.0 \\
        1500 & 1800 & 0.3 & 339 & 14.38 & \textbf{10.84} & 0.0 \\
        1500 & 1800 & 0.5 & 339 & 24.83 & \textbf{13.97} & 0.0 \\
        1500 & 1800 & 0.7 & 339 & 41.98 & \textbf{19.02} & 0.0 \\
        1500 & 2100 & 0.1 & 346 & \textbf{7.98} & 10.05 & 0.0 \\
        1500 & 2100 & 0.3 & 346 & 17.27 & \textbf{12.20} & 0.0 \\
        1500 & 2100 & 0.5 & 346 & 33.35 & \textbf{16.27} & 0.0 \\
        1500 & 2100 & 0.7 & 346 & 66.81 & \textbf{19.72} & 0.0 \\
        1500 & 2400 & 0.1 & 352 & \textbf{8.52} & 13.54 & 0.0 \\
        1500 & 2400 & 0.3 & 352 & 20.00 & \textbf{17.78} & 0.0 \\
        1500 & 2400 & 0.5 & 352 & 39.01 & \textbf{24.75} & 0.0 \\
        1500 & 2400 & 0.7 & 352 & 65.85 & \textbf{31.95} & 0.0 \\
        \hline
      \end{tabular}
    }
  \end{center}
  \caption{Results on infeasible instances.}
  \label{t:2}
\end{table}
We denote by $m$ the number of rows, by $n$ the number of columns, and by \textsf{dens} the properties of the constraint matrix $A$. We then report the number of rows $k$ in the projected problem, the time \textsf{orgCPU} taken to solve the original LP, the time \textsf{prjCPU} taken to solve the projected LP, and the accuracy \textsf{acc} (``zero'' means that no instance was incorrectly classified as feasible in the projection). While for smaller instances the proposed methodology is not competitive as regards the CPU time, the trend clearly shows that the larger the size of the orginal LP, the higher the chances of our methodology being faster, in accordance with theory. We remark that \textsf{prjCPU} is the sum of the times taken to sample $T$, to perform the matrix multiplication $TA$,  and to solve the projected problem. 

\subsection{Feasible instances}

Feasible instances are benchmarked on CPU time as well as on three discrepancy measures to ascertain the quality of the approximated solution $x^\ast$ of the projected LP. In particular, we look at feasibility with respect to both $Ax=b$ and $x\ge 0$, as well as at the optimality gap between the approximate and the guaranteed optimal objective function value. Unfortunately, we found very high errors in the application of the solution retrieval method in Alg.~\ref{alg2}, which we are only able to justify by claiming our test LPs are ``too small''. We therefore also tested a different solution retrieval method based on the pseudoinverse: it consists in replacing $A_{\mathcal{B}} x = b$ (see last line of Alg.~\ref{alg2}) by the reduced system $\transpose{A}_{\mathcal{H}}A_{\mathcal{H}} x = \transpose{A}_{\mathcal{H}}b$, where $\mathcal{H}$ is a basis of the projected problem $P_T$ (the reconstruction of the full solution from the projected basic components is heuristic). Accordingly, we present two sets of statistics for feasible instances: one labelled ``1'', referring to Alg.~\ref{alg2}, and the other labelled ``2'', referring to the pseudoinverse variant.

The results on the feasible instances are given in Table \ref{t:2}. Again, each line is obtained as an average over the 10 instances with same $m,n,\mbox{\sf dens}$. The CPU time comparison takes three columns: \textsf{orgCPU} refers to the time taken by CPLEX to solve the original LP; \textsf{prjCPU1} is the sum of the times taken to sample $T$, multiply $T$ by $A$, solve the projected LP, and retrieve the original solution by Alg.~\ref{alg2}; and \textsf{prjCPU2} is the same as \textsf{prjCPU1} but using the solution retrieval method based on the pseudoinverse. The solution quality is evaluated in the six columns \textsf{feas1}, \textsf{feas2} (verifying feasibility with respect to $Ax=b$ using the two retrieval methods), \textsf{neg1}, \textsf{neg2} (verifying feasibility with respect to $x\ge 0$ using the two retrieval methods), and \textsf{obj1}, \textsf{obj2} (evaluating the optimality gap using the two retrieval methods), defined as follows:
\begin{itemize}
  \item $\mbox{\sf feas}=\frac{1}{\|b\|_1} \sum\limits_{i\le m}|A^i x^\ast-b_i|$;
  \item $\mbox{\sf neg}=\frac{1}{\|x^\ast\|_1} \sum\limits_{x^\ast_j<0} |x_j^\ast|$;
  \item $\mbox{\sf obj}=\frac{|v(P)-v(P_T)|}{|v(P)|}$.
\end{itemize}

The results are presented in Table \ref{t:3}.
\begin{table}[!ht]
  \begin{center}
    {\footnotesize
      \hspace*{-0.5cm}\begin{tabular}{rrr|r|rrr|rr|rr|rr}
        $m$ & $n$ & \textsf{dens} & $k$ & \textsf{orgCPU} & \textsf{prjCPU1} & \textsf{prjCPU2} & \textsf{feas1} & \textsf{feas2} & \textsf{neg1} & \textsf{neg2} & \textsf{obj1} & \textsf{obj2} \\ \hline
        500 & 600 & 0.1 & 289 & \textbf{2.42} & 9.97 & 6.76 & 0.000 & 0.000 &  0.437 & \textbf{0.033} & 0.079 & \textbf{0.055} \\
        500 & 600 & 0.3 & 289 & \textbf{2.41} & 10.24 & 7.08 & 0.000 & 0.000 &  0.442 & \textbf{0.035} & 0.029 & 0.027 \\
        500 & 600 & 0.5 & 289 & \textbf{3.06} & 10.53 & 7.37 & 0.000 & 0.000 &  0.444 & \textbf{0.037} & 0.023 & 0.020 \\
        500 & 600 & 0.7 & 289 & \textbf{3.89} & 10.81 & 7.72 & 0.000 & 0.000 &  0.454 & \textbf{0.036} & 0.042 & \textbf{0.014} \\
        500 & 700 & 0.1 & 296 & \textbf{2.53} & 10.41 & 7.11 & 0.000 & 0.000 &  0.467 & \textbf{0.039} & 0.246 & \textbf{0.050} \\
        500 & 700 & 0.3 & 296 & \textbf{2.46} & 10.72 & 7.58 & 0.000 & 0.000 &  0.453 & \textbf{0.045} & 0.068 & \textbf{0.025} \\
        500 & 700 & 0.5 & 296 & \textbf{3.43} & 11.10 & 7.97 & 0.000 & 0.000 &  0.475 & \textbf{0.043} & 0.065 & \textbf{0.017} \\
        500 & 700 & 0.7 & 296 & \textbf{4.45} & 11.40 & 8.45 & 0.000 & 0.000 &  0.468 & \textbf{0.038} & 0.028 & \textbf{0.012} \\
        500 & 800 & 0.1 & 302 & \textbf{2.01} & 10.67 & 7.58 & 0.000 & 0.000 &  0.472 & \textbf{0.059} & 0.102 & \textbf{0.045} \\
        500 & 800 & 0.3 & 302 & \textbf{2.55} & 11.10 & 8.02 & 0.000 & 0.000 &  0.463 & \textbf{0.060} & 0.053 & \textbf{0.023} \\
        500 & 800 & 0.5 & 302 & \textbf{3.69} & 11.60 & 8.48 & 0.000 & 0.000 &  0.474 & \textbf{0.061} & 0.068 & \textbf{0.015} \\
        500 & 800 & 0.7 & 302 & \textbf{5.03} & 12.03 & 9.02 & 0.000 & 0.000 &  0.473 & \textbf{0.054} & 0.044 & \textbf{0.011} \\
        1000 & 1200 & 0.1 & 321 & \textbf{6.49} & 14.03 & 10.04 & 0.000 & 0.000 &  0.466 & \textbf{0.012} & \textbf{0.036} & 0.067 \\
        1000 & 1200 & 0.3 & 321 & \textbf{9.16} & 15.82 & 11.61 & 0.000 & 0.000 &  0.468 & \textbf{0.012} & 0.054 & \textbf{0.030} \\
        1000 & 1200 & 0.5 & 321 & 14.71 & 17.52 & \textbf{13.46} & 0.000 & 0.000 &  0.487 & \textbf{0.013} & 0.277 & \textbf{0.021} \\
        1000 & 1200 & 0.7 & 321 & 26.89 & 19.45 & \textbf{14.44} & 0.000 & 0.000 &  0.464 & \textbf{0.013} & 0.092 & \textbf{0.014} \\
        1000 & 1400 & 0.1 & 327 & \textbf{6.88} & 15.54 & 11.50 & 0.000 & 0.000 &  0.484 & \textbf{0.013} & 0.222 & \textbf{0.058} \\
        1000 & 1400 & 0.3 & 327 & \textbf{10.34} & 17.05 & 12.91 & 0.000 & 0.000 &  0.495 & \textbf{0.016} & 0.411 & \textbf{0.026} \\
        1000 & 1400 & 0.5 & 327 & 22.80 & 19.85 & \textbf{16.23} & 0.000 & 0.000 &  0.488 & \textbf{0.013} & 0.144 & \textbf{0.016} \\
        1000 & 1400 & 0.7 & 327 & 34.73 & 21.64 & \textbf{16.47} & 0.000 & 0.000 &  0.484 & \textbf{0.013} & 0.111 & \textbf{0.012} \\
        1000 & 1600 & 0.1 & 333 & \textbf{7.16} & 16.98 & 12.93 & 0.000 & 0.000 &  0.487 & \textbf{0.021} & 0.857 & \textbf{0.056} \\
        1000 & 1600 & 0.3 & 333 & \textbf{11.39} & 20.11 & 15.93 & 0.000 & 0.000 &  0.480 & \textbf{0.016} & 0.102 & \textbf{0.021} \\
        1000 & 1600 & 0.5 & 333 & 25.44 & 22.42 & \textbf{18.73} & 0.000 & 0.000 &  0.486 & \textbf{0.017} & 0.073 & \textbf{0.014} \\
        1000 & 1600 & 0.7 & 333 & 40.84 & 26.31 & \textbf{21.28} & 0.000 & 0.000 &  0.483 & \textbf{0.016} & 0.066 & \textbf{0.010} \\
        1500 & 1800 & 0.1 & 339 & \textbf{9.77} & 21.64 & 15.68 & 0.000 & 0.000 &  0.479 & \textbf{0.005} & 0.069 & 0.064 \\
        1500 & 1800 & 0.3 & 339 & 20.81 & 26.33 & \textbf{18.89} & 0.000 & 0.000 &  0.477 & \textbf{0.004} & 0.042 & \textbf{0.027} \\
        1500 & 1800 & 0.5 & 339 & 42.95 & 29.95 & \textbf{22.36} & 0.000 & 0.000 &  0.473 & \textbf{0.004} & 0.054 & \textbf{0.018} \\
        1500 & 1800 & 0.7 & 339 & 74.23 & 35.63 & \textbf{27.82} & 0.000 & 0.000 &  0.472 & \textbf{0.005} & 0.016 & 0.013 \\
        1500 & 2100 & 0.1 & 346 & \textbf{10.38} & 24.78 & 19.02 & 0.000 & 0.000 &  0.485 & \textbf{0.007} & 0.095 & \textbf{0.057} \\
        1500 & 2100 & 0.3 & 346 & 25.74 & 29.22 & \textbf{21.88} & 0.000 & 0.000 &  0.487 & \textbf{0.007} & 0.156 & \textbf{0.022} \\
        1500 & 2100 & 0.5 & 346 & 52.21 & 34.06 & \textbf{26.06} & 0.000 & 0.000 &  0.483 & \textbf{0.007} & 0.046 & \textbf{0.015} \\
        1500 & 2100 & 0.7 & 346 & 90.18 & 36.81 & \textbf{29.58} & 0.000 & 0.000 &  0.487 & \textbf{0.005} & 0.064 & \textbf{0.010} \\
        1500 & 2400 & 0.1 & 352 & \textbf{11.26} & 27.90 & 22.12 & 0.000 & 0.000 &  0.485 & \textbf{0.006} & 0.121 & \textbf{0.050} \\
        1500 & 2400 & 0.3 & 352 & 29.85 & 35.97 & \textbf{28.58} & 0.000 & 0.000 &  0.485 & \textbf{0.006} & 0.134 & \textbf{0.019} \\
        1500 & 2400 & 0.5 & 352 & 61.25 & 42.47 & \textbf{34.99} & 0.000 & 0.000 &  0.489 & \textbf{0.006} & 0.253 & \textbf{0.011} \\
        1500 & 2400 & 0.7 & 352 & 104.58 & 49.98 & \textbf{43.00} & 0.000 & 0.000 &  0.492 & \textbf{0.006} & 0.126 & \textbf{0.008} \\
        \hline
      \end{tabular}
    }
  \end{center}
  \caption{Results on feasible instances.}
  \label{t:3}
\end{table}
Again, we see an encouraging trend showing that the CPU time for creating and solving the projected LP becomes smaller than the time taken to solve the original LP as size and density increase. According to our theoretical development, increasing size/density further will give a definite advantage to our methodology based on random projections. It is clear that feasibility w.r.t.~$Ax=b$ is never a problem. On the other hand, feasibility w.r.t.~non-negativity is an issue, expected with the pseudoinverse-based solution retrieval method, but not necessarily with Alg.~\ref{alg2}. After checking it (and its implementation) multiple times, we came to two possible conclusions: (i) that our arbitrary choice of universal constants is wrong for Alg.~\ref{alg2}, which would require larger instances than those we tested in order to work effectively; (ii) that the choice of the basis $\mathcal{B}$ in Alg.~\ref{alg2} is heavily affected by numerical errors, and therefore wrong. We have been unable to establish which of these reasons is most impactful, and delegate this investigation to future work. For the time being, we propose the pseudoinverse variant as the method of choice.
}

\LL{
\section{An application to error correcting codes}
\label{s:coding}
In this section we showcase an application of our methodology to a problem of error correcting coding and decoding \cite[\S 8.5]{matousek-lp}.

  A binary word $w$ of length $m$ can be encoded as a word $z$ of length $n$ (with $m<n$) such that $z=Qw$ where $Q$ is an $n\times m$ real matrix, which we assume to have rank $m$. After transmission on an analogue noisy channel the other party receives $\bar{z}$. We assume $\bar{z}=z+\bar{x}$, where the transmission error $\bar{x}_j$ on the $j$-th character is uniformly distributed in $[-\delta,\delta]$ for some given $\delta>0$ with some given (reasonably small) probability $\epsilon>0$, and $\bar{x}_j=0$ with probability $1-\epsilon$. In other words, $x$ is a sparse vector with density $\epsilon$.

  The decoding of $\bar{z}$ into $w$ is carried out as follows. We find an $m\times n$ matrix $A$ orthogonal to $Q$ (so $AQ=0$), we compute $b=A\bar{z}$ and note that 
\[ b=A\bar{z}=A(z+x)=A(Qw+x) = AQw + Ax = Ax.\]
If the system $Ax=b$ can be solved, we can find $z'=\bar{z}-x$, and recover $w$ using the projection matrix $(\transpose{Q}Q)^{-1}\transpose{Q}$ followed by rounding:
\[ w=\lfloor(\transpose{Q}Q)^{-1}\transpose{Q}z'\rceil. \]

The protocol rests on finding a sparse solution of the under-determined linear system $Ax=b$. Minimizing the number of non-zero components of a vector that also satisfies $Ax=b$ is known as ``zero-norm minimization'', and is {\bf NP}-hard \cite{natarajan}. In a celebrated discovery later called {\it compressed sensing}, Cand\`es, Rohmberg, Tao and Donoho discovered that the zero-norm is well approximated by the $\ell_1$-norm. We therefore consider the following problem
  \[\min \{ \| x \|_1 \;|\; Ax=b\},\]
which can be readily reformulated to the LP
\begin{equation}
  \min \{ \sum_j s_j \;|\; -s\le x\le s \land Ax=b\}. \label{ecc}
\end{equation}
We propose to compare the solution of Eq.~\eqref{ecc} with that of its randomly projected version:
\begin{equation}
  \min \{ \sum_j s_j \;|\; -s\le x\le s \land TAx=Tb\}, \label{eccT}
\end{equation}
where $T$ is an Achlioptas random projector. The computational set-up for this test is similar to that of Sect.~\ref{s:compres}, except that we enable the crossover in the CPLEX barrier solver.

We compare Eq.~\eqref{ecc} and Eq.~\eqref{eccT} on the sentence that the Sybilla of Delphos spoke to the hapless soldier who asked her whether he would get back from the war or die in it: {\it Ibis redibis non morieris in bello} [Alberico delle Tre Fontane, {\it Chronicon}], at which the soldier rejoiced. When his wife heard he died in the war, she contacted the Sybilla for a full refund. The Sybilla, unperturbed, pointed out that the small print in the legal terms attributed her the right of inserting commas in sentences as she saw fit, which made her prophecy into the more reality-oriented {\it Ibis redibis non, morieris in bello}. We test here the comma-free version, much more cryptic, ambiguous, and therefore worthy of the Sybilla.

The original sentence is encoded in ASCII-128 and then in binary without padding (1001001 1100010 1101001 1110011 1000001 1100101 1001011 1001001 1010011 1000101 1010011 1100111 0000011 0111011 0111111 0111010 0000110 1101110 1111111 0010110 1001110 0101111 0010110 1001111 0011100 0001101 0011101 1101000 0011000 1011001 0111011 0011011 0011011 11). The binary string has $m=233$ characters, is encoded into $n=256$ characters (assuming an error rate of 10\%, typical of the Sybilla muttering incantations with low and guttural voice), and is then projected into $k=61$ characters. We modified the parameter of the Achlioptas projector from 1/6 down to 1/100 after verifying with many examples that this particular application is extremely robust to random projections. 

While the original LP took 0.296s to solve, the projected LP only took 0.028s. The accuracy in retrieving the original text was perfect. In fact, in this application it is very hard to make mistakes in the recovery; so much so, that we could set the JLL $\varepsilon$ at 0.3. This might be partly due to the fact that the LP in Eq.~\eqref{ecc} does not include nonnegativity constraints, which are generally problematic because of their large Gaussian width, see Sect.~\ref{s:litrev}.

We also tested a slightly longer word sequence from a well-known poem about aviary permanence on greek sculptures: {\it Once upon a midnight, dreary, while I pondered, weak and weary}. The 421 characters long binary string is encoded into 463 characters and projected into 67. The original LP took 1.332s and the projected LP took 0.064s to solve; again, the retrieval accuracy was perfect.
}

\section{Conclusion}
\label{s:conclusion}
This paper is about the application of random projections to LP in standard form. We prove that feasibility and optimality are both approximately preserved by sub-gaussian random projections. Moreover, we show how to retrieve solutions of the original LPs from those of the projected LPs, using duality arguments. These findings make it possible to approximately solve very large scale LPs with high probability, \LL{as showcased by our computational results and application to error correcting codes.}

\section*{Acknowledgments}
We are grateful to A.A.~Ahmadi and D.~Malioutov for introducing us to the Johnson-Lindenstrauss lemma, and to F.~Tardella for helpful discussions. We are also grateful to two anonymous referees for helping us to improve the paper. This research is partly supported by the SO-grid project (\url{www.so-grid.com}) funded by ADEME, by the EU Grant FP7-PEOPLE-2012-ITN No.~316647 ``Mixed-Integer Nonlinear Optimization'', and by the ANR ``Bip:Bip'' project under contract ANR-10-BINF-0003. The first author is supported by a Microsoft Research Ph.D.~fellowship.

\ifmor
\bibliographystyle{informs2014}
\else
\bibliographystyle{plain}
\fi
\bibliography{dr2}

\begin{thebibliography}{10}

\bibitem{achlioptas}
D.~Achlioptas.
\newblock Database-friendly random projections: {J}ohnson-{L}indenstrauss with
  binary coins.
\newblock {\em Journal of Computer and System Sciences}, 66:671--687, 2003.

\bibitem{micali}
Z.~Allen-Zhu, R.~Gelashvili, S.~Micali, and N.~Shavit.
\newblock Sparse sign-consistent {J}ohnson-{L}indenstrauss matrices:
  {C}ompression with neuroscience-based constraints.
\newblock {\em Proceedings of the National Academy of Sciences},
  111(47):16872--16876, 2014.

\bibitem{anthony}
M.~Anthony and N.~Biggs.
\newblock {\em Computational Learning Theory: an Introduction}.
\newblock Cambridge Tracts in Theoretical Computer Science. Cambrige University
  Press, Cambridge, 1992.

\bibitem{julia}
J.~Bezanson, A.~Edelman, S.~Karpinski, and V.~Shah.
\newblock Julia: A fresh approach to numerical computing.
\newblock {\em SIAM Review}, 59(1):65--98, 2017.

\bibitem{borgwardt}
K.~Borgwardt.
\newblock {\em The Simplex Method: a Probabilistic Analysis}.
\newblock Springer, Berlin, 1987.

\bibitem{boutsidis2010}
C.~Boutsidis, A.~Zouzias, and P.~Drineas.
\newblock Random projections for $k$-means clustering.
\newblock In {\em Advances in Neural Information Processing Systems}, pages
  298--306, La Jolla, 2010. NIPS Foundation.

\bibitem{candestao2005}
E.~Cand\`es and T.~Tao.
\newblock Decoding by {L}inear {P}rogramming.
\newblock {\em IEEE Transactions on Information Theory}, 51(12):4203--4215,
  2005.

\bibitem{dasgupta2}
A.~Dasgupta, R.~Kumar, and T.~Sarl\'os.
\newblock A sparse {J}ohnson-{L}indenstrauss transform.
\newblock In {\em Proceedings of the Symposium on the Theory Of Computing},
  volume~10 of {\em STOC}, Cambridge, 2010. ACM.

\bibitem{dasgupta}
S.~Dasgupta and A.~Gupta.
\newblock An elementary proof of a theorem by johnson and lindenstrauss.
\newblock {\em Random Structures and Algorithms}, 22:60--65, 2002.

\bibitem{pucci}
D.~Pucci de~Farias and B.~Van Roy.
\newblock On constraint sampling in the {L}inear {P}rogramming approach to
  approximate {D}ynamic {P}rogramming.
\newblock {\em Mathematics of Operations Research}, 29(3):462--478, 2004.

\bibitem{cplex126}
IBM.
\newblock {\em ILOG CPLEX 12.6 User's Manual}.
\newblock IBM, 2014.

\bibitem{indyk}
P.~Indyk and A.~Naor.
\newblock Nearest neighbor preserving embeddings.
\newblock {\em ACM Transactions on Algorithms}, 3(3):Art.~31, 2007.

\bibitem{jllemma}
W.~Johnson and J.~Lindenstrauss.
\newblock Extensions of {L}ipschitz mappings into a {H}ilbert space.
\newblock In G.~Hedlund, editor, {\em Conference in Modern Analysis and
  Probability}, volume~26 of {\em Contemporary Mathematics}, pages 189--206,
  Providence, 1984. American Mathematical Society.

\bibitem{kane}
D.~Kane and J.~Nelson.
\newblock Sparser {J}ohnson-{L}indenstrauss transforms.
\newblock {\em Journal of the ACM}, 61(1):4, 2014.

\bibitem{jump}
M.~Lubin and I.~Dunning.
\newblock Computing in operations research using julia.
\newblock {\em INFORMS Journal on Computing}, 27(2):238--248, 2015.

\bibitem{matousek-jll}
J.~Matou\v{s}ek.
\newblock On variants of the {J}ohnson-{L}indenstrauss lemma.
\newblock {\em Random Structures and Algorithms}, 33:142--156, 2008.

\bibitem{matousekmetric}
J.~Matou\v{s}ek.
\newblock Lecture notes on metric embeddings.
\newblock Technical report, ETH Z\"urich, 2013.

\bibitem{matousek-lp}
J.~Matou\v{s}ek and B.~G\"artner.
\newblock {\em Understanding and using {L}inear {P}rogramming}.
\newblock Springer, Berlin, 2007.

\bibitem{natarajan}
B.~Natarajan.
\newblock Sparse approximate solutions to linear systems.
\newblock {\em SIAM Journal of Computing}, 24(2):227--234, 1995.

\bibitem{netlib_lp}
NetLib.
\newblock {LP} instance library, 2015.
\newblock {\tt http://www.netlib.org/lp/}.

\bibitem{pan}
V.~Pan.
\newblock On the complexity of a pivot step of the revised simplex algorithm.
\newblock {\em Computers \& Mathematics with Applications}, 11(11):1127--1140,
  1985.

\bibitem{pilanci2014}
M.~Pilanci and M.~Wainwright.
\newblock Randomized sketches of convex programs with sharp guarantees.
\newblock In {\em International Symposium on Information Theory (ISIT)}, pages
  921--925, Piscataway, 2014. IEEE.

\bibitem{wrightIPM}
F.~Potra and S.~Wright.
\newblock Interior-point methods.
\newblock {\em Journal of Computational and Applied Mathematics}, 124:281--302,
  2000.

\bibitem{venkatasubramanian}
S.~Venkatasubramanian and Q.~Wang.
\newblock The {J}ohnson-{L}indenstrauss transform: An empirical study.
\newblock In {\em Algorithm Engineering and Experiments}, volume~13 of {\em
  ALENEX}, pages 164--173, Providence, 2011. SIAM.

\bibitem{quantreg2}
J.~Yang, X.~Meng, and M.~Mahoney.
\newblock Quantile regression for large-scale applications.
\newblock {\em SIAM Journal of Scientific Computing}, 36(5):S78--S110, 2014.

\end{thebibliography}

\end{document}